\newtheorem{theorem}{Theorem}[section]
\newtheorem{lemma}[theorem]{Lemma}
\newtheorem{proposition}[theorem]{Proposition}
\newtheorem{corollary}[theorem]{Corollary}
\newtheorem{definition}[theorem]{Definition}
\theoremstyle{remark}
\newtheorem{remark}{Remark}[section]
\newtheorem{rmks}{Remarks}[section]
\newtheorem*{Examples}{Examples}
\newcommand{\C}{\mathcal{C}}
\newcommand{\F}{\mathcal{F}}
\renewcommand{\P}{\mathscr{P}}
\newcommand{\U}{\mathcal{U}}
\newcommand{\X}{\mathcal{X}}
\newcommand{\MLR}{\mathsf{MLR}}
\newcommand{\dom}{\text{dom}}
\newcommand{\Tree}{\mathsf{Tree}}
\newcommand{\llb}{\llbracket}
\newcommand{\rrb}{\rrbracket}
\newcommand{\Supp}{\mathsf{Supp}}
\definecolor{purple}{rgb}{.9,0.2,.9}
\newcommand{\eqdef}{\mathrel{\mathop:}=}
\newcommand{\cs}{2^\omega}
\newcommand{\ccs}{3^\omega}
\newcommand{\uh}{{\upharpoonright}}
\renewcommand{\phi}{\varphi}
\newcommand{\str}{2^{<\omega}}
\newcommand{\diverge}{{\uparrow}}
\newcommand{\ran}{\mathrm{ran}}
\title{The Interplay of Classes of Algorithmically Random Objects}
\author{Quinn Culver and Christopher P.\ Porter}
\date{} 
\thanks{Christopher Porter was supported by the National Science Foundation under grant OISE-1159158 as part of the International Research Fellowship Program.  Porter was also funded by the John Templeton Foundation (`Structure and Randomness in the Theory of Computation' project). The opinions expressed in this publication are those of the authors and do not necessarily reflect the views of the John Templeton Foundation.}
\begin{document}

\maketitle

\begin{abstract}
We study algorithmically random closed subsets of $\cs$, algorithmically random continuous functions from $\cs$ to $\cs$, and algorithmically random Borel probability measures on $\cs$, especially the interplay between these three classes of objects. Our main tools are preservation of randomness and its converse, the no randomness ex nihilo principle, which say together that given an almost-everywhere defined computable map between an effectively compact probability space and an effective Polish space, a real is Martin-Löf random for the pushforward measure if and only if its preimage is random with respect to the measure on the domain. These tools allow us to prove new facts, some of which answer previously open questions, and reprove some known results more simply.

Our main results are the following.  First we answer an open question in \cite{Barmpalias:2008aa} by showing that $\X\subseteq\cs$ is a random closed set if and only if it is the set of zeros of a random continuous function on $\cs$.  As a corollary we obtain the result that the collection of random continuous functions on $\cs$ is not closed under composition.  Next, we construct a computable measure $Q$ on the space of measures on $\cs$ such that $\X\subseteq\cs$ is a random closed set if and only if $\X$ is the support of a $Q$-random measure.  We also establish a correspondence between random closed sets and the random measures studied in \cite{Culver:2014aa}.  Lastly, we study the ranges of random continuous functions, showing that the Lebesgue measure of the range of a random continuous function is always contained in $(0,1)$.  
\end{abstract}

\section{Introduction}

In this paper, we have two primary goals: (1) to study the interplay between algorithmically random closed
  sets on $\cs$, algorithmically random continuous functions on $\cs$,
  and algorithmically random measures on $\cs$; and (2) to apply two central results, namely the preservation of
  randomness principle and the no randomness ex nihilo principle, to
  the study of the algorithmically random objects listed above.

Barmpalias, Brodhead, Cenzer, Dashti and Weber initiated the study of
algorithmically random closed subsets of $\cs$ in
\cite{Barmpalias:2007aa}.  Algorithmically random closed sets were
further studied in, for instance, \cite{Axon:2010aa},
\cite{Diamondstone:2012aa}, and \cite{Cenzer:2013aa}.  In the spirit
of their definition of algorithmically random closed set, Barmpalias,
Brodhead, Cenzer, Dashti and Weber also defined a notion of
algorithmically random continuous function on $\cs$ in
\cite{Barmpalias:2008aa}.  The connection between random closed sets
and effective capacities was explored in \cite{Brodhead:2011aa}.  More
recently, Culver has studied algorithmically random measures on $\cs$
in \cite{Culver:2014aa}.

One of the central results  \cite{Barmpalias:2008aa} is that the set of 
zeros of a random continuous function of $\cs$ is a random closed 
subset of $\cs$. Inspired by this result, we here investigate similar bridge
results, which allow us to transfer information about one class of
algorithmically random objects to another.

Two tools that are central to our investigation, mentioned in (2)
above, are the preservation of randomness principle and the no
randomness ex nihilo principle.  In $\cs$, the space of infinite
binary sequences, the preservation of randomness principle tells us
that if $\Phi:\cs\rightarrow\cs$ is an effective map and $\mu$ is a
computable measure on $\cs$, then $\Phi$ maps $\mu$-random members of
$\cs$ to members of $\cs$ that are random with respect to the measure
$\nu$ obtained by pushing $\mu$ forward via $\Phi$.  Furthermore, the
no randomness ex nihilo principle tells us that any sequence that is
random with respect to $\nu$ is the image of some $\mu$-random
sequence under $\Phi$.  Used in tandem, these two principles allow us
to conclude that the image of the $\mu$-random sequences under $\Phi$
are precisely the $\nu$-random sequences.

With the exception of \cite{Culver:2014aa}, the studies listed above
do not make use of these two tools used in tandem.  As we will show,
they not only allow for the simplification of a number of proofs in
the above-listed studies, but they also allow us to answer a number of
questions that were left open in these studies.

The outline of the remainder of this paper is as follows.  In Section
\ref{sec-background}, we provide the requisite background for the rest
of the paper.  In Section \ref{sec-aro}, we review the basics of
algorithmic randomness, including preservation and the no randomness
ex nihilo principle.  We also provide the definitions of algorithmic
randomness for closed sets of $\cs$, random continuous functions on
$\cs$, and measures on $\cs$ and list some basic properties of these
objects.  Section \ref{sec-prelim} contains simplified proofs of some
previously obtained results from \cite{Barmpalias:2007aa} and
\cite{Barmpalias:2008aa}, as well as a proof of a conjecture in
\cite{Barmpalias:2008aa} that every random closed subset of $\cs$ is
the set of zeros of a random continuous function on $\cs$.  We study
the support of a certain class of random measures in Section
\ref{sec-support} and establish a correspondence between between
random closed sets and the random measures studied in
\cite{Culver:2014aa}.  Lastly, in Section \ref{sec-range}, we prove
that the Lebesgue measure of the range of a random continuous function
on $\cs$ is always non-zero, from which it follows that no random
continuous function is injective (which had not been previously
established).  We also strengthen a result in \cite{Barmpalias:2008aa}
(namely, that not every random continuous function is surjective) by
proving that no random continuous function is surjective, from which
it follows that the Lebesgue measure of the range of a random
continuous function is never equal to one.

\section{Background}\label{sec-background}

\subsection{Some topological and measure-theoretic basics}

For $n=\{0,1,\ldots n-1\}\in \omega$, the set of all finite strings
over the alphabet $n$ is denoted $n^{<\omega}$. When $n=2$, we let
$\sigma_{0}, \sigma_{1}, \sigma_{2},\dotsc$ be the canonical
length-lexicographic enumeration of $2^{<\omega}$, so that
$\sigma_{0}=\epsilon$ (the empty string), $\sigma_{1}=0$, $\sigma_{2}=1$, etc.

$n^{\omega}$ is the space of all infinite sequences
over the alphabet $n$. The elements of $n^{\omega}$ are also called
\emph{reals}. The product topology on $n^{\omega}$ is generated by the
clopen sets 
\[
\llb\sigma\rrb= \{x \in n^{\omega}: x\succ \sigma\},
\] where
$\sigma \in n^{<\omega}$ and $x \succ \sigma$ means that $\sigma$ is
an initial segment of $x$. When $x$ is a real and $k\in\omega$, $x\restriction k$ 
denotes the initial segment of $x$ of length $k$.

For $\sigma,\tau\in n^{<\omega}$, $\sigma^{\frown}\tau$ denotes the concatenation
of $\sigma$ and $\tau$.  In some cases, we will write this concatenation
as $\sigma\tau$.

A \emph{tree} is a subset of $n^{<\omega}$ that is closed under
initial segments; i.e.\ $T\subseteq n^{<\omega}$ is a tree if $\sigma
\in T$ whenever $\tau \in T$ and $\sigma \preceq \tau$. A \emph{path}
through a tree $T\subseteq n^{<\omega}$ is a real $x \in n^{\omega}$
satisfying $x \restriction k \in T$ for every $k$. The set of all
paths through a tree $T$ is denoted $[T]$. Recall the correspondence
between closed sets and trees.

\begin{proposition}
  A set $C\subseteq n^{\omega}$ is closed if and only if $C=[T]$ for
  some tree $T\subseteq n^{<\omega}$. Moreover, $C$ is non-empty if and
  only if $T$ is infinite.  \label{prop:closedpathstree}
\end{proposition}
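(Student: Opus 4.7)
The plan is to prove both directions by passing through the canonical tree associated to a closed set.

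For the ``only if'' direction, I would define, given a closed set $C\subseteq n^\omega$, the \emph{canonical tree}
\[
T_C \eqdef \{\sigma \in n^{<\omega} : \llb\sigma\rrb \cap C \neq \emptyset\}.
\]
It is immediate that $T_C$ is closed under initial segments (if $\llb\tau\rrb \cap C \neq \emptyset$ and $\sigma \preceq \tau$, then $\llb\sigma\rrb \supseteq \llb\tau\rrb$ meets $C$ as well). To verify $C = [T_C]$, the inclusion $C \subseteq [T_C]$ is trivial since every initial segment of an element of $C$ witnesses its own membership in $T_C$. For $[T_C]\subseteq C$, I would take $x \in [T_C]$ and, for each $k$, choose $x_k \in \llb x\uh k\rrb \cap C$; then $x_k \to x$ in the product topology, so closedness of $C$ forces $x \in C$.

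For the ``if'' direction, I would show that $[T]$ is closed for any tree $T$ by exhibiting its complement as a union of basic open sets: if $x \notin [T]$ then there is some least $k$ with $x\uh k \notin T$, and the basic clopen $\llb x\uh k\rrb$ is a neighborhood of $x$ disjoint from $[T]$ (because any extension of $x\uh k$ in $[T]$ would force $x\uh k \in T$ by the tree property).

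For the ``moreover'' clause, I would work with $T_C$. If $C \neq \emptyset$, then picking any $x \in C$ yields infinitely many strings $x\uh k \in T_C$, so $T_C$ is infinite. Conversely, if $T_C$ is infinite, then since each level $T_C \cap n^k$ is finite (there are at most $n^k$ strings of length $k$), $T_C$ is a finitely branching infinite tree, and K\"onig's lemma produces an infinite path through $T_C$, i.e., an element of $[T_C] = C$.

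The argument is almost entirely bookkeeping; the only non-trivial ingredient is K\"onig's lemma in the last step, and the one subtlety worth being careful about is that the ``if and only if'' in the moreover statement is naturally read as referring to the canonical tree $T_C$, since an arbitrary tree representation of $C$ could always be padded with dead branches.
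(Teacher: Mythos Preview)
The paper does not actually prove this proposition; it is stated as a recalled standard fact (``Recall the correspondence between closed sets and trees'') with no accompanying proof. Your argument is the standard one and is correct as far as the main equivalence and the forward direction of the ``moreover'' clause go.

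Your closing remark, however, is mistaken. The biconditional in the ``moreover'' clause holds for \emph{any} tree $T\subseteq n^{<\omega}$ with $C=[T]$, not merely for the canonical tree $T_C$. The point is that every tree in $n^{<\omega}$ is finitely branching, so K\"onig's lemma applies to an arbitrary such $T$: if $T$ is infinite then $[T]\neq\emptyset$, regardless of how many dead branches $T$ carries. Conversely, if $T$ is finite then the lengths of its strings are bounded and $[T]=\emptyset$. Thus there is no way to ``pad with dead branches'' to obtain an infinite tree whose set of paths is empty; your caveat is unnecessary and slightly misleading. Once that remark is dropped, the proof is complete and correct.
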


A \emph{measure} $\mu$ on $n^{\omega}$ is a function that assigns to
each Borel subset of $n^{\omega}$ a number in the unit interval
$[0,1]$ and satisfies $\mu(\bigcup_{i \in \omega} B_{i})= \sum_{i \in
  \omega} \mu(B_{i})$ whenever the $B_{i}$'s are pairwise disjoint.
Carath\'{e}odory's extension theorem guarantees that the conditions
\begin{itemize}
  \item $\mu(\llb\epsilon\rrb)= 1$ and
  \item $\mu(\llb\sigma\rrb) = \mu(\llb\sigma0\rrb) +
  \mu(\llb\sigma1\rrb) + \ldots + \mu(\llb\sigma^{\frown}(n-1)\rrb) $ for
  all $\sigma \in n^{<\omega}$
\end{itemize}
uniquely determine a measure on $n^{\omega}$.  Thus a measure is
identified with a function $\mu\colon n^{<\omega} \to [0,1]$
satisfying the above conditions and $\mu(\sigma)$ is often written
instead of $\mu(\llb\sigma\rrb)$.  The Lebesgue measure $\lambda$
on $n^\omega$ is defined by $\lambda(\sigma) = n^{-|\sigma|}$ for each string
$\sigma\in n^{<\omega}$.

Given a measure $\mu$ on $n^\omega$ and $\sigma,\tau\in n^{<\omega}$, 
$\mu(\sigma\tau\mid\sigma)$ is defined to be
\[
\mu(\sigma\tau\mid\sigma)=\dfrac{\mu(\llb\sigma\tau\rrb)}{\mu(\llb\sigma\rrb)}.
\]

\subsection{Some computability theory}
\label{sec:some-comp-theory}

We assume the reader is familiar with the basic concepts of computability
theory as found, for instance, in the early chapters of \cite{Soare:1987aa}.

A $\Sigma^0_1$ \emph{class} $S\subseteq n^\omega$ is an effectively 
open set, i.e., an effective union of basic clopen subsets of $n^\omega$.
$P\subseteq n^\omega$ is a $\Pi^0_1$ \emph{class} if $\cs\setminus P$ is a 
$\Sigma^0_1$ class.

A partial function $\Phi\colon \subseteq n^{\omega} \to m^{\omega}$ is
\emph{computable} if the preimage of a $\Sigma^{0}_{1}$ subset of
$m^{\omega}$ is a $\Sigma^{0}_{1}$ subset of the domain of $\Phi$,
uniformly; that is, if for every $\Sigma^{0}_{1}$ class $U\subseteq
m^{\omega}$, there is a $\Sigma^{0}_{1}$ class $V\subseteq n^{\omega}$
such that $\Phi^{-1}(U) = V \cap \dom(\Phi)$, and an index for $V$ can be
uniformly computed from an index for $U$. Equivalently, $\Phi\colon
\subseteq n^{\omega} \to m^{\omega}$ is computable if there is an
oracle Turing machine that when given $x \in n^{\omega}$ (as an
oracle) and $k \in \omega$ outputs $\Phi(x)(k)$.  We can relativize the notion 
of a computable function $\Phi\colon \subseteq n^{\omega} \to m^{\omega}$
to any oracle $z\in\cs$ to obtain a $z$-\emph{computable} function.

A measure $\mu$ on $n^\omega$ is computable if $\mu(\sigma)$ 
is a computable real number, uniformly in $\sigma\in n^{<\omega}$. 
Clearly, the Lebesgue measure $\lambda$ is computable.

If $\mu$ is a computable measure on $n^\omega$ and 
$\Phi\colon \subseteq n^{\omega} \to m^{\omega}$ is a computable function
defined on a set of $\mu$-measure one, then the \emph{pushforward measure}
$\mu_\Phi$ defined by
\[
\mu_\Phi(\sigma)=\mu(\Phi^{-1}(\sigma))
\]
for each $\sigma\in m^{<\omega}$ is a computable measure.

\section{Algorithmically random objects}\label{sec-aro}

In this section, we lay out the definitions of the various algorithmically random objects that are the subject of this study.
For more details, see \cite{Nies:2009aa} or \cite{Downey:2010aa}.

\subsection{Algorithmically random sequences}  

\begin{definition}
  Let $\mu$ be a computable measure on $n^{\omega}$ and let $z\in
  m^{\omega}$. 
  \begin{itemize}
   \item[(i)] A \emph{$\mu$-Martin-L\"of test relative to $z$} (or simply a \emph{$\mu$-test relative to $z$}) is a uniformly $\Sigma^{0,z}_{1}$ sequence 
   $(U_{i})_{i\in\omega}$ of subsets  of $n^{\omega}$  with $\mu (U_{n}) \leq 2^{-n}$.
  \item[(ii)] $x\in n^\omega$ passes such a test $(U_{i})_{i\in\omega}$ if $x\notin \bigcap_{n} U_{n}$.
  \item[(iii)] $x\in n^\omega$ is $\mu$-Martin-L\"of random relative to $z$ if $x$ passes every $\mu$-Martin-L\"of test relative to $z$.
  \end{itemize}
\end{definition}
  
  We will often write ``random" instead of ``Martin-L\"of random," since we are only working with one notion of randomness in
  this paper (although there are other reasonable notions of algorithmic randomness that one might consider).   The collection 
  of $\mu$-random sequences relative to $z$ will be denoted $\MLR^{z}_{\mu}$.  When $z$ is computable we 
  simply write $\MLR_{\mu}$, say that $x$ is $\mu$-random, call $(U_{i})_{i\in\omega}$ a $\mu$-test, etc.

The following is well-known and straightforward.
\begin{proposition}\label{prop-pi01-null}
  Let $\mu$ be a computable measure on $n^{\omega}$ and $z\in
  m^{\omega}$. If $C\subseteq n^{\omega}$ is $\Pi^{0,z}_{1}$ and $\mu
  (C)=0$, then $C \cap \MLR^{z}_{\mu}=\emptyset$.
\end{proposition}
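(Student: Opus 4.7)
The plan is to explicitly cover $C$ by a $\mu$-Martin-L\"of test relative to $z$. Since $C$ is $\Pi^{0,z}_1$, its complement $U := n^\omega \setminus C$ is $\Sigma^{0,z}_1$, so we can write $U = \bigcup_{k \in \omega} \llb \tau_k \rrb$ for a $z$-computably enumerable sequence of strings $(\tau_k)_{k \in \omega}$. For each stage $s$, let $U_s := \bigcup_{k < s} \llb \tau_k \rrb$ and let $V_s := n^\omega \setminus U_s$. Each $V_s$ is a $z$-computable clopen set (a finite union of cylinders determined by $z$-computable data), and $V_s \supseteq C$ for all $s$, with $V_s \searrow C$.

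Next I would use computability of $\mu$ together with the fact that $V_s$ is a finite union of cylinders with $z$-computable indexing to conclude that $\mu(V_s)$ is $z$-computable uniformly in $s$. Continuity of measure from above (applied to the finite measure $\mu$) gives $\mu(V_s) \searrow \mu(C) = 0$. Hence for each $n$ we may $z$-computably search for the least stage $s(n)$ with $\mu(V_{s(n)}) < 2^{-n}$; this search terminates precisely because $\mu(V_s) \to 0$.

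Setting $W_n := V_{s(n)}$ yields a uniformly $\Sigma^{0,z}_1$ (in fact clopen) sequence with $\mu(W_n) \leq 2^{-n}$ and $C \subseteq \bigcap_n W_n$. Thus $(W_n)_{n \in \omega}$ is a $\mu$-Martin-L\"of test relative to $z$, and any $x \in C$ belongs to every $W_n$, so no such $x$ is in $\MLR^z_\mu$. The only mildly delicate point is verifying that $\mu(V_s)$ is $z$-computable uniformly in $s$ so that the halting search for $s(n)$ is legitimate; this follows immediately from the definition of computability of $\mu$ and the fact that $V_s$ is a finite Boolean combination of cylinders whose defining strings are uniformly $z$-computable in $s$.
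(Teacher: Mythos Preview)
Your argument is correct and is exactly the standard proof one would expect; note that the paper does not actually supply a proof of this proposition, merely declaring it ``well-known and straightforward,'' so there is nothing to compare against.

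One small wording issue: you speak of searching for \emph{the least} stage $s(n)$ with $\mu(V_{s(n)}) < 2^{-n}$. Since $\mu(V_s)$ is only a $z$-computable real, the strict inequality $\mu(V_s) < 2^{-n}$ is merely $\Sigma^{0,z}_1$ in $s$, not decidable, so you cannot in general identify the least such $s$. What you can do is dovetail over all $s$ and output the first $s$ that is \emph{witnessed} to satisfy the inequality; because $\mu(V_s)\to 0$, such a witness will appear. This yields a $z$-computable $s(n)$, which is all your construction needs. Replacing ``least'' by ``some'' (found by this search) removes the imprecision without changing anything else.
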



The following is likely folklore, but was at least observed
in~\cite{Bienvenu:2012aa}.  
\begin{proposition}
  Let $\mu$ be a computable measure on $n^{\omega}$. If $\Phi\colon
  \subseteq n^{\omega} \to m^{\omega}$ is computable with $\mu(\dom
  (\Phi))=1$, then $\MLR_{\mu}\subseteq \dom(\Phi)$.
\end{proposition}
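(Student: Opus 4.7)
The plan is to expose $n^\omega\setminus\dom(\Phi)$ as a countable union of $\Pi^0_1$ classes each of $\mu$-measure zero, and then apply Proposition~\ref{prop-pi01-null} to each of them.

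First I would fix an oracle Turing machine $M$ witnessing the computability of $\Phi$, so that for each $x \in \dom(\Phi)$ and each $k \in \omega$, $M^x(k)\halts = \Phi(x)(k)$, and $x \in \dom(\Phi)$ precisely when $M^x(k)\halts$ for every $k$. For each $k \in \omega$, set
\[
V_k = \{x \in n^\omega : M^x(k)\diverge\}.
\]
Since the set $\{x : M^x(k)\halts\}$ is $\Sigma^0_1$ uniformly in $k$, each $V_k$ is a $\Pi^0_1$ class, uniformly in $k$. By the characterization of $\dom(\Phi)$ just given, we have
\[
n^\omega \setminus \dom(\Phi) = \bigcup_{k\in\omega} V_k.
\]

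Next I would use the measure hypothesis. Since $\mu(\dom(\Phi)) = 1$, the set $n^\omega \setminus \dom(\Phi)$ is $\mu$-null, and so in particular $\mu(V_k) = 0$ for every $k$. Applying Proposition~\ref{prop-pi01-null} to each $V_k$ yields $V_k \cap \MLR_\mu = \emptyset$, and hence
\[
\MLR_\mu \cap \bigl(n^\omega\setminus\dom(\Phi)\bigr) \;=\; \MLR_\mu \cap \bigcup_{k\in\omega} V_k \;=\; \emptyset,
\]
giving $\MLR_\mu \subseteq \dom(\Phi)$ as desired.

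There is no real obstacle here; the only subtle point is the observation that although $\dom(\Phi)$ itself is only $\Pi^0_2$ (so that one cannot apply Proposition~\ref{prop-pi01-null} directly to its complement), its complement is a \emph{uniform} union of $\Pi^0_1$ null classes, and Martin-L\"of randomness is preserved under such countable unions. This is exactly the kind of situation Proposition~\ref{prop-pi01-null} was designed for.
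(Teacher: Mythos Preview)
Your argument is correct and is the standard proof of this fact. Note, however, that the paper does not actually give a proof of this proposition: it is stated as folklore (with a reference to \cite{Bienvenu:2012aa}) and left unproved, so there is no paper proof to compare against. Your write-up would serve perfectly well as the omitted proof.
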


\begin{lemma}[Folklore]\label{lem-total}
  Let $\Phi\colon\subseteq \cs \to \cs$ be computable and $C$ a
  $\Pi^{0}_{1}$ subset of $\dom(\Phi)$. Then $\Phi(C)\in \Pi^{0}_{1}$,
  uniformly. \label{lem:CompImPi01}
\end{lemma}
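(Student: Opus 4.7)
The plan is to exhibit the complement $\cs \setminus \Phi(C)$ as an effectively open set, uniformly in indices for $\Phi$ and $C$. First I would observe that since $C$ is a $\Pi^0_1$ class it is compact, and since $\Phi$ is continuous on $\dom(\Phi) \supseteq C$, the image $\Phi(C)$ is compact and hence closed. Consequently $\cs \setminus \Phi(C) = \bigcup\{\llb \tau \rrb : \llb \tau \rrb \cap \Phi(C) = \emptyset\}$, so it suffices to show that the set
\[
S = \{\tau \in 2^{<\omega} : \llb \tau \rrb \cap \Phi(C) = \emptyset\}
\]
is c.e., uniformly in the given data.

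Next I would unpack the definition of computability of $\Phi$: for each $\tau$ there is a $\Sigma^0_1$ set $V_\tau \subseteq \cs$, with an index computed uniformly from $\tau$, such that $\Phi^{-1}(\llb \tau \rrb) = V_\tau \cap \dom(\Phi)$. Because the hypothesis guarantees $C \subseteq \dom(\Phi)$, we have
\[
\llb \tau \rrb \cap \Phi(C) = \emptyset \iff \Phi^{-1}(\llb \tau \rrb) \cap C = \emptyset \iff V_\tau \cap C = \emptyset,
\]
which is equivalent to $V_\tau \cup (\cs \setminus C) = \cs$.

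The latter equation can be recognized in a $\Sigma^0_1$ way using compactness of Cantor space: both $V_\tau$ and $\cs \setminus C$ are enumerated as unions of basic clopen sets uniformly in $\tau$ and in the indices for $\Phi$ and $C$, and their joint union exhausts $\cs$ if and only if some finite subcollection of those clopens already does, a condition one can search for effectively. Enumerating $S$ in this fashion produces a $\Sigma^0_1$ index for $\cs \setminus \Phi(C)$, and hence a $\Pi^0_1$ index for $\Phi(C)$, uniformly in indices for $\Phi$ and $C$.

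The only real subtlety is the hypothesis $C \subseteq \dom(\Phi)$, without which one could not replace $\Phi^{-1}(\llb \tau \rrb) \cap C$ by $V_\tau \cap C$ in the chain of equivalences above; with this hypothesis in hand, the argument reduces to a standard compactness-plus-enumeration computation, and I do not anticipate any further obstacle.
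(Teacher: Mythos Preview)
Your argument is correct: the compactness-plus-enumeration computation you outline is exactly the standard proof, and your identification of the key use of the hypothesis $C\subseteq\dom(\Phi)$ is accurate. The paper itself does not supply a proof of this lemma (it is stated as folklore), so there is nothing further to compare.
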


One of the central tools that we will use in this study is the following.  

\begin{theorem}[Preservation of Randomness \cite{Levin:1970aa} and No Randomness Ex Nihilo \cite{Shen:2008aa}.]
  Let $\Phi\colon \subseteq \cs \to \cs$ be computable with $\lambda
  (\dom(\Phi))=1$.
  \begin{enumerate}[(i)]
    \item If $x \in \MLR_{\lambda}$ then $\Phi(x) \in \MLR_{\lambda_\Phi}$.
    \item If $y \in \MLR_{\lambda_\Phi}$, then there is $x\in
    \MLR_{\lambda}$ such that $\Phi(x)=y$.
  \end{enumerate}
  \label{thm:PoR-and-NReN}
\end{theorem}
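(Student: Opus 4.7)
The plan is to handle (i) and (ii) by dual constructions that move $\lambda$- and $\lambda_\Phi$-ML tests back and forth through $\Phi$.

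For (i), I would pull back a given $\lambda_\Phi$-ML test $(U_n)_n$ through $\Phi$. By the definition of computability of $\Phi$ from Section \ref{sec:some-comp-theory}, for each $n$ there is, uniformly, a $\Sigma^0_1$ class $V_n'\subseteq\cs$ with $\Phi^{-1}(U_n) = V_n'\cap\dom(\Phi)$. Since $\dom(\Phi)$ has full $\lambda$-measure, $\lambda(V_n') = \lambda(\Phi^{-1}(U_n)) = \lambda_\Phi(U_n) \leq 2^{-n}$ by the definition of the pushforward, so $(V_n')_n$ is a $\lambda$-ML test. Any $x\in\MLR_\lambda$ avoids some $V_n'$, and since $\MLR_\lambda\subseteq\dom(\Phi)$ by the previous proposition, this means $\Phi(x)\notin U_n$, whence $\Phi(x)\in\MLR_{\lambda_\Phi}$.

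For (ii), the dual idea is to turn $\Pi^0_1$ ``cores'' of $\MLR_\lambda$ into a $\lambda_\Phi$-ML test via Lemma \ref{lem-total}. Concretely, I would build, uniformly in $n$, a $\Pi^0_1$ class $C_n\subseteq\dom(\Phi)$ with $\lambda(C_n)\geq 1-2^{-n}$ and $\bigcup_n C_n\supseteq\MLR_\lambda$. Then Lemma \ref{lem-total} produces $\Pi^0_1$ classes $K_n:=\Phi(C_n)$ uniformly in $n$, and $\lambda_\Phi(K_n)\geq\lambda(C_n)\geq 1-2^{-n}$ because $\Phi^{-1}(K_n)\supseteq C_n$. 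Thus $(\cs\setminus K_n)_n$ is, up to reindexing, a $\lambda_\Phi$-ML test. Any $y\in\MLR_{\lambda_\Phi}$ therefore lies in some $K_n$, giving $y=\Phi(x)$ for some $x\in C_n\subseteq\MLR_\lambda$, as desired.

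The technical obstacle is ensuring that $C_n$ is a $\Pi^0_1$ subset of $\dom(\Phi)$, not merely of $\cs$: simply taking $C_n:=\cs\setminus W_n$ for the $n$-th level $W_n$ of a universal $\lambda$-ML test fails, since $C_n\not\subseteq\dom(\Phi)$ in general and so Lemma \ref{lem-total} does not apply. The fix is to trim $\dom(\Phi)$ effectively. Writing $\dom(\Phi)=\bigcap_k D_k$ with $D_k:=\{x:\Phi(x)(k){\downarrow}\}$ a $\Sigma^0_1$ class of $\lambda$-measure one, I would compute clopen inner approximations $E_{n,k}\subseteq D_k$ with $\lambda(E_{n,k})\geq 1-2^{-n-k-1}$, using that the measures $\lambda(D_k)$ are uniformly computable reals equal to $1$. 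Setting $P_n:=\bigcap_k E_{n,k}$ then gives a $\Pi^0_1$ subset of $\dom(\Phi)$ of $\lambda$-measure at least $1-2^{-n}$, and $C_n:=P_n\cap(\cs\setminus W_n)$ does the job (with monotone enumeration one can arrange $P_n\subseteq P_{n+1}$ so that $\bigcup_n C_n\supseteq\MLR_\lambda$). The rest of the argument is just the interplay of the two facts that preimages of $\Sigma^0_1$ classes under computable functions are $\Sigma^0_1$ and images of $\Pi^0_1$ subsets of $\dom(\Phi)$ are $\Pi^0_1$.
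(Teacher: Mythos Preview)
Your argument follows the paper's: for (i) you pull back a $\lambda_\Phi$-test through $\Phi$, and for (ii) you push forward the $\Pi^0_1$ complements of a universal $\lambda$-test via Lemma~\ref{lem-total} and check that their complements form a $\lambda_\Phi$-test. The one difference is that you go to extra trouble to ensure $C_n\subseteq\dom(\Phi)$ before invoking Lemma~\ref{lem-total}, whereas the paper simply sets $K_n=\cs\setminus U_n$ and applies the lemma without checking this hypothesis---so your trimming construction via clopen inner approximations of the $D_k$ is in fact patching a detail the paper glosses over. One small remark: you list $\bigcup_n C_n\supseteq\MLR_\lambda$ among your goals, but the argument never uses it; what you actually need at the end is the containment $C_n\subseteq\MLR_\lambda$, which is immediate from $C_n\subseteq\cs\setminus W_n$.
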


\begin{proof}\hfill
  \begin{enumerate}[(i)]
    \item If $\Phi(x) \notin \MLR_{\lambda_\Phi}$, then $\Phi(x) \in
    \bigcap_{n} V_{n}$ for some $\lambda_\Phi$-test $(V_i)_{i\in\omega}$. But then
    $x \in \bigcap_{n} \Phi^{-1}(V_{n})$ and $\lambda(\Phi^{-1}(V_{n})) \leq
    2^{-n}$. Moreover, because $\Phi$ is computable (on its domain),
    $\Phi^{-1}(V_{n})=U_{n}\cap \dom(\Phi)$ for some $\Sigma^{0}_{1}$ class
    $U_{n}$. Since $\lambda(\dom(\Phi))=1$, $\lambda(\Phi^{-1}(U_{n}))\leq
    2^{-n}$. Thus $x \notin \MLR_{\lambda}$.

    \item Let $(U_{i})_{i\in\omega}$ be a universal test for $\lambda$-randomness and
    set $K_{n}=X\setminus U_{n}$. Then $\Phi(K_{n})$ is uniformly $\Pi^{0}_{1}$ by
    Lemma~\ref{lem:CompImPi01}, so $Y\setminus \Phi(K_{n})$ is uniformly
    $\Sigma^{0}_{1}$. Because $\lambda_\Phi(Y\setminus \Phi(K_{n})) = 1-
    \lambda_\Phi(\Phi(K_{n})) \leq 1- \lambda(K_{n}) \leq
    2^{-n}$, the sets $Y\setminus\Phi(K_{n})$ form a test for $\lambda_\Phi$-randomness. So if $y \in \MLR_{\lambda_\Phi}$, then
    $y \notin Y\setminus\Phi(K_{n})$ for some $n$; i.e.\ $y \in \Phi(K_{n})$. The
    proof is now complete since $K_{n} \subseteq \MLR_{\lambda}$.
  \end{enumerate}
\end{proof}

We will also use a relativization of Theorem~\ref{thm:PoR-and-NReN}.
\begin{corollary}
  Let $\Phi\colon \subseteq \cs \to \cs$ be computable relative to $z\in
  \cs$ with $\lambda (\dom(\Phi))=1$.
  \begin{enumerate}[(i)]
    \item If $x \in \MLR^{z}_{\lambda}$ then $\Phi(x) \in
    \MLR_{\lambda_\Phi}^{z}$.
    \item If $y \in \MLR_{\lambda_\Phi}^{z}$, then there is
    $x\in \MLR_{\lambda}^{z}$ such that $\Phi(x)=y$.
  \end{enumerate}
\end{corollary}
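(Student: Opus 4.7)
The plan is to rerun the proof of Theorem~\ref{thm:PoR-and-NReN} verbatim after relativizing every effectivity notion that appears in it to the oracle $z$. Concretely, I would replace $\Sigma^0_1$, $\Pi^0_1$, ``computable'', and ``universal $\lambda$-test'' throughout by $\Sigma^{0,z}_1$, $\Pi^{0,z}_1$, ``$z$-computable'', and ``universal $\lambda$-test relative to $z$'', respectively. The measure-theoretic estimates are unchanged, since $\lambda$ remains computable and $\lambda_\Phi$ is $z$-computable, which is all that is needed.

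For (i), I would assume $\Phi(x)\notin\MLR^z_{\lambda_\Phi}$ and pick a $\lambda_\Phi$-test $(V_n)_{n\in\omega}$ relative to $z$ capturing $\Phi(x)$. Since $\Phi$ is $z$-computable, the preimages $\Phi^{-1}(V_n)$ are uniformly $\Sigma^{0,z}_1$ relative to $\dom(\Phi)$; writing $\Phi^{-1}(V_n) = U_n \cap \dom(\Phi)$ for uniformly $\Sigma^{0,z}_1$ classes $U_n$, the hypothesis $\lambda(\dom(\Phi))=1$ gives $\lambda(U_n)\le \lambda_\Phi(V_n)\le 2^{-n}$. Thus $(U_n)_{n\in\omega}$ is a $\lambda$-test relative to $z$ catching $x$, contradicting $x\in\MLR^z_\lambda$.

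For (ii), I would fix a universal $\lambda$-test $(U_n)_{n\in\omega}$ relative to $z$ and set $K_n=\cs\setminus U_n$, which is uniformly $\Pi^{0,z}_1$. The key observation here is that Lemma~\ref{lem:CompImPi01} itself relativizes: the image under a $z$-computable $\Phi$ of a $\Pi^{0,z}_1$ subset of $\dom(\Phi)$ is $\Pi^{0,z}_1$, by the same compactness-plus-enumeration argument run with a $z$-oracle. Hence each $\cs\setminus\Phi(K_n)$ is uniformly $\Sigma^{0,z}_1$, and the measure bound $\lambda_\Phi(\cs\setminus\Phi(K_n)) = 1 - \lambda_\Phi(\Phi(K_n)) \le 1 - \lambda(K_n)\le 2^{-n}$ shows these sets form a $\lambda_\Phi$-test relative to $z$. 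Any $y\in\MLR^z_{\lambda_\Phi}$ must therefore lie in some $\Phi(K_n)$, and since $K_n\subseteq\MLR^z_\lambda$ by universality, any preimage $x\in K_n$ with $\Phi(x)=y$ works. The only nontrivial step is noting that Lemma~\ref{lem:CompImPi01} relativizes — but this is completely routine, so I do not expect any genuine obstacle.
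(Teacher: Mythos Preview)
Your proposal is correct and matches the paper's approach: the paper states this corollary immediately after Theorem~\ref{thm:PoR-and-NReN} with no proof, clearly intending it as the routine relativization you describe. Your explicit verification that Lemma~\ref{lem:CompImPi01} relativizes is the only point needing comment, and as you note it is standard.
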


Lastly, the following result, known as van Lambalgen's theorem, will be useful to us.

\begin{theorem}[\cite{VanLambalgen:1990aa}]\label{thm-vlt}
Let $\mu$ and $\nu$ be computable measures on $m^\omega$ and $n^\omega$, respectively.  Then for $(x,y)\in m^\omega\times n^\omega$,
$(x,y)\in\MLR_{\mu\otimes\nu}$ if and only if $x\in\MLR_\mu^y$ and $y\in\MLR_\nu$.
\end{theorem}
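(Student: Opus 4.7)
The plan is to prove both implications via Fubini-style arguments that swap the roles of the two coordinates.

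For the forward direction, assume $(x,y) \in \MLR_{\mu\otimes\nu}$. That $y \in \MLR_\nu$ is immediate: any $\nu$-test $(V_n)$ lifts to a $(\mu\otimes\nu)$-test $(m^\omega \times V_n)$ with the same measure bound that $(x,y)$ must pass. For $x \in \MLR^y_\mu$, I fix a universal $\mu$-Martin-L\"of test relative to a generic oracle $z$, given uniformly in $z$ as $(U^z_n)_n$ with $\mu(U^z_n) \leq 2^{-n}$ for every $z$. Were $x$ not $\mu$-random relative to $y$, one would have $x \in \bigcap_n U^y_n$, so the sets
\[
W_n = \{(x',y') \in m^\omega \times n^\omega : x' \in U^{y'}_n\}
\]
form a $\Sigma^0_1$ family in the product space; by Fubini,
\[
(\mu\otimes\nu)(W_n) = \int \mu(U^{y'}_n)\,d\nu(y') \leq 2^{-n},
\]
so $(W_n)$ is a $(\mu\otimes\nu)$-test capturing $(x,y)$, a contradiction.

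For the converse, I argue by contrapositive starting from a $(\mu\otimes\nu)$-test $(W_n)$ capturing $(x,y)$. Writing $W^{y'}_{2n} = \{x' : (x',y') \in W_{2n}\}$ for the $y'$-slice, set
\[
V_n = \{y' \in n^\omega : \mu(W^{y'}_{2n}) > 2^{-n}\}.
\]
Markov's inequality applied to $W_{2n}$ yields $\nu(V_n) \leq 2^{-n}$, and since $\mu$ is computable while $W_{2n}$ is $\Sigma^0_1$ in the product, the slice measure $y' \mapsto \mu(W^{y'}_{2n})$ is lower semicomputable, so each $V_n$ is uniformly $\Sigma^0_1$. Passing to tail unions $\tilde V_n = \bigcup_{k \geq n} V_k$ gives a nested $\nu$-test (with the measure bound restored by a constant index shift), and since $y \in \MLR_\nu$ there is some $n_0$ with $y \notin V_n$ for every $n \geq n_0$. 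For all such $n$ the slices $W^y_{2n}$ are uniformly $\Sigma^{0,y}_1$, satisfy $\mu(W^y_{2n}) \leq 2^{-n}$, and contain $x$; reindexing yields a $\mu$-Martin-L\"of test relative to $y$ that captures $x$, contradicting $x \in \MLR^y_\mu$.

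The main obstacle is the Fubini step in the forward direction: one must ensure the universal relativized test $U^{y'}_n$ satisfies its measure bound \emph{uniformly} in $y'$, not merely for a co-null set of oracles. The standard fix is to truncate the enumeration of $U^z_n$ the moment its $\mu$-measure would exceed $2^{-n}$, which preserves the joint $\Sigma^0_1$ structure in $(x',z)$ and makes Fubini's bound immediate.
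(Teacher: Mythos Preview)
The paper does not supply a proof of this theorem; it is stated with a citation to van Lambalgen and used as a black box, so there is no ``paper's own proof'' to compare against. Your argument is the standard one and is essentially correct: the forward direction lifts tests to the product via a uniformly relativized universal test (with the truncation you mention to guarantee the measure bound holds for \emph{every} oracle), and the converse uses Markov's inequality on the slice measures to split a product test into a $\nu$-test on the second coordinate and, for $y$ passing it, a $y$-relative $\mu$-test on the first.

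One small point worth tightening in the converse: after taking tail unions $\tilde V_n$ and finding $n_0$ with $y\notin\tilde V_{n_0}$, the reindexed family $(W^y_{2(n+n_0)})_{n\in\omega}$ is indeed a $\mu$-test relative to $y$, but note that $n_0$ is not computable from $y$ in general. This is harmless, since you only need to exhibit \emph{some} $\mu$-test relative to $y$ capturing $x$, and a fixed constant $n_0$ may appear in its index; still, it is worth saying explicitly that uniformity in $y$ is not required at this step.
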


\subsection{Algorithmically random closed subsets of $\cs$}



Let $\C(\cs)$ denote the collection of all non-empty closed subsets of
$\cs$. As noted in Proposition~\ref{prop:closedpathstree}, these are
the sets of paths through infinite binary trees. Thus to randomly
generate a non-empty closed set, it suffices to randomly generate an
infinite tree. Following \cite{Barmpalias:2007aa}, we will code infinite trees by reals in $3^{\omega}$, thereby reducing 
the process of randomly generating infinite trees to the process of randomly generating reals.

Given $x\in 3^{\omega}$, define a tree $T_{x}\subseteq \str$
inductively as follows. First $\epsilon$, the empty string is
automatically in $T_{x}$. Now suppose $\sigma\in T_{x}$ is the $(i+1)$-st extendible node in $T_x$. Then
\begin{itemize}
  \item ${\sigma}^{\frown} 0 \in T_{x}$ and ${\sigma}^{\frown} 1
  \notin T_{x}$ if $x(i)=0$;
  \item ${\sigma}^{\frown} 0 \notin T_{x}$ and ${\sigma}^{\frown}
  1 \in T_{x}$ if $x(i)=1$;
  \item ${\sigma}^{\frown} 0 \in T_{x}$ and ${\sigma}^{\frown} 1
  \in T_{x}$ if $x(i)=2$.
\end{itemize}
Under this coding $T_{x}$ has no dead ends and hence is always
infinite. Note that every tree without dead ends can be coded by some
$x\in\cs$.

\begin{definition}\label{def:closed-set-defn}
  A non-empty closed set $C \in \C(\cs)$ is a \emph{random closed set}
  if $C=[T_{x}]$ for some $x \in \MLR_{\lambda}$.
\end{definition}

The main facts about random closed sets that we will use in the sequel are 
as follows.

\begin{theorem}[\cite{Barmpalias:2007aa}]\label{thm-racs-null}
Every random closed set has Lebesgue measure zero.
\end{theorem}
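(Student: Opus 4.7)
The plan is to combine a branching-process computation with Proposition \ref{prop-pi01-null}. For $x \in \ccs$, let $N_n(x) = |T_x \cap 2^n|$ denote the number of nodes of $T_x$ at level $n$. Because every node of $T_x$ is extendible, $\lambda([T_x])$ is the non-increasing limit of $N_n(x)/2^n$, and $N_n(x)$ depends on only finitely many trits of $x$, so each $N_n : \ccs \to \omega$ is computable.

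The central computation I would carry out is $\integral{N_n}{\lambda} = (4/3)^n$, where here $\lambda$ denotes Lebesgue measure on $\ccs$. The coding consumes each trit $x(i)$ exactly once, to determine the children of the $(i+1)$-st node of $T_x$; under $\lambda$ the trits are i.i.d.\ uniform on $\{0,1,2\}$, so each node produces two children with probability $1/3$ and one child with probability $2/3$, independently of the tree's history. This yields $\integral{N_{n+1}}{\lambda} = (4/3)\integral{N_n}{\lambda}$, and induction from $N_0=1$ gives the claim. Markov's inequality then provides, for every $n$ and $k$,
\[
\lambda\bigl\{x \in \ccs : N_n(x) \geq 2^{n-k}\bigr\} \leq 2^k(2/3)^n.
\]

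Next I would define
\[
B_k = \bigl\{x \in \ccs : N_n(x) \geq 2^{n-k}\text{ for all } n \in \omega\bigr\}.
\]
Each condition $\{x : N_n(x) \geq 2^{n-k}\}$ is clopen, so $B_k$ is $\Pi^0_1$; the Markov bound above forces $\lambda(B_k) = 0$. Proposition \ref{prop-pi01-null} then yields $B_k \cap \MLR_\lambda = \emptyset$ for every $k$. Finally, if some $x \in \MLR_\lambda$ had $\lambda([T_x]) > 0$, pick $k$ with $\lambda([T_x]) \geq 2^{-k}$; since $N_n(x)/2^n$ is non-increasing with limit $\lambda([T_x])$, we get $N_n(x) \geq 2^{n-k}$ for every $n$, placing $x \in B_k$, a contradiction.

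The only step that demands genuine care is the independence claim underlying $\integral{N_{n+1}}{\lambda} = (4/3)\integral{N_n}{\lambda}$: the canonical enumeration of nodes is a deterministic function of the tree already built, so the trit controlling any particular level-$n$ node is a coordinate of $x$ not yet consulted, and hence is fresh uniform on $\{0,1,2\}$ conditionally on $T_x$ restricted to levels $\le n$. Everything else is routine counting and bookkeeping.
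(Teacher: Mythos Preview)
The paper does not actually prove Theorem~\ref{thm-racs-null}; it merely cites the result from \cite{Barmpalias:2007aa} and uses it later. So there is no in-paper argument to compare against.

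Your proof is correct. The branching-process computation $\integral{N_n}{\lambda}=(4/3)^n$ is right (each node has mean offspring $\tfrac{2}{3}\cdot 1+\tfrac{1}{3}\cdot 2=\tfrac{4}{3}$), the Markov bound is applied correctly, each $B_k$ is genuinely $\Pi^0_1$ with measure zero, and the final monotonicity argument linking $\lambda([T_x])\ge 2^{-k}$ to membership in $B_k$ is sound. The independence justification you flag is exactly the point one needs: because the index of the trit governing a given level-$n$ node is a stopping time determined by the tree through level $n$, that trit is conditionally uniform and independent of the tree so far. This is the standard Galton--Watson argument, and it is essentially the approach used in the original \cite{Barmpalias:2007aa} proof as well (there phrased in terms of the expected measure $\integral{\lambda([T_x])}{\lambda}\le (2/3)^n$ and an explicit Martin-L\"of test); your packaging via Proposition~\ref{prop-pi01-null} is a clean variant.
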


\begin{theorem}[\cite{Barmpalias:2007aa}]\label{thm-racs-perfect}
Every random closed set is perfect.
\end{theorem}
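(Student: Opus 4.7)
The plan is to show that for random $x$, the closed set $[T_x]$ has no isolated points. Since $T_x$ has no dead ends, a path $y \in [T_x]$ is isolated if and only if some prefix $\sigma \prec y$ has the property that no node of $T_x$ properly extending $\sigma$ is splitting, where I call $\tau \in T_x$ \emph{splitting} if both $\tau 0$ and $\tau 1$ lie in $T_x$. So it suffices to show that for $x \in \MLR_\lambda$, every $\sigma \in T_x$ has a splitting descendant in $T_x$: then given $y \in [T_x]$ and $n \in \omega$, a splitting node $\tau \succeq y \uh n$ in $T_x$ produces (via its ``other'' child and the fact that $T_x$ has no dead ends) a path $y' \in [T_x]$ with $y' \neq y$ and $y' \uh n = y \uh n$.

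For each $\sigma \in \str$, consider
\[
A_\sigma = \{x \in \ccs : \sigma \in T_x \text{ and no node of } T_x \text{ properly extending } \sigma \text{ is splitting}\}.
\]
I would show that $A_\sigma$ is $\Pi^0_1$ and $\lambda$-null, so that by Proposition~\ref{prop-pi01-null} we have $A_\sigma \cap \MLR_\lambda = \emptyset$. Since $\str$ is countable, every $x \in \MLR_\lambda$ avoids $\bigcup_{\sigma \in \str} A_\sigma$, which gives the desired property. That $A_\sigma$ is $\Pi^0_1$ is straightforward from the construction: membership of any given node in $T_x$ is decided from a finite prefix of $x$, so the condition ``no node of $T_x$ strictly above $\sigma$ is assigned value $2$ by $x$'' is co-c.e.\ in $x$. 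For nullness, conditional on the clopen event $\sigma \in T_x$ (which depends only on finitely many bits of $x$) the subtree $\{\tau \in T_x : \tau \succeq \sigma\}$ is infinite, so infinitely many extendible nodes above $\sigma$ are processed in the canonical enumeration, each consuming a distinct fresh coordinate of $x$. The bit at each such coordinate is independently uniform on $\{0,1,2\}$, and the event of missing $2$ at all of them has probability $\prod (2/3) = 0$.

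The main point requiring care is the probabilistic argument underlying nullness: I need to verify that, conditional on $\sigma \in T_x$, the coordinates of $x$ indexing the splitting decisions at nodes properly extending $\sigma$ are (i) disjoint from the coordinates that determined $\sigma \in T_x$, and (ii) remain jointly uniform on $\{0,1,2\}^\omega$. Both follow from the fact that the canonical length-lexicographic enumeration of extendible nodes in $T_x$ consumes the coordinates of $x$ in strictly increasing order without repetition, so once the finite prefix of $x$ witnessing $\sigma \in T_x$ has been read, the remaining coordinates are untouched and independent of the conditioning.
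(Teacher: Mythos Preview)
The paper does not prove this theorem; it merely quotes it from \cite{Barmpalias:2007aa}. So there is no in-paper argument to compare against, and the relevant question is simply whether your proof stands on its own.

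Your argument is correct. The reduction to showing that each $A_\sigma$ is a $\Pi^0_1$ null class is exactly the right move, and Proposition~\ref{prop-pi01-null} then finishes it. One point deserves a slightly sharper justification than you give: in claim~(ii) you assert that the coordinates of $x$ used at nodes properly extending $\sigma$ ``remain jointly uniform on $\{0,1,2\}^\omega$,'' and you justify this by saying that once the finite prefix witnessing $\sigma\in T_x$ has been read, the remaining coordinates are untouched. That last phrase is not literally true: coordinates consumed by nodes \emph{incomparable} with $\sigma$ (descendants of the ``other'' side of each splitting ancestor of $\sigma$) are interleaved with the coordinates for nodes above $\sigma$, and which positions get used above $\sigma$ depends on those other values. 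What makes (ii) valid is that the indices $I_0<I_1<\cdots$ of the nodes properly above $\sigma$ are stopping times for the i.i.d.\ sequence $x$ (the identity of the $(i+1)$-st extendible node, and whether it extends $\sigma$, is determined by $x(0),\dots,x(i-1)$); by optional skipping, $(x(I_0),x(I_1),\dots)$ is therefore i.i.d.\ uniform on $\{0,1,2\}$, and the conditioning event $\sigma\in T_x$ lies in the pre-$I_0$ $\sigma$-algebra, so it does not disturb this. With that clarification your nullness computation $\prod_{j}(2/3)=0$ is fully justified, and the proof goes through.
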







\subsection{Algorithmically random continuous functions on $\cs$}

Let $\F(\cs)$ denote the collection of all continuous $F\colon
\subseteq \cs \to \cs$. To define a random continuous function, we
code each element of $\F(\cs)$ by a real $x\in 3^{\omega}$ (as carried out in \cite{Barmpalias:2008aa}). 
The coding is a labeling of the edges of $2^{\omega}$ (or equivalently, all nodes in $\str$ 
except  $\epsilon$) by the digits of $x$. Having labeled the edges according to 
$x$, the function $F_{x}$ coded by $x$ is defined by $F_{x}(y)=z$ if $z$ is the 
element of $2^{\omega}$ left over after following $y$ through the labeled tree
and removing the $2$'s. (In the case where only finitely many $0$'s
and $1$'s remain after removing the $2$'s, $F_{x}(y)$ is undefined.)

Formally, define a labeling function $\ell_{x}\colon
2^{<\omega} \setminus \{\epsilon\} \to 3$ by $\ell_{x}(\sigma_{i}) =
x_{i-1}$ (recall that $(\sigma_i)_{i\in\omega}$ is the standard
enumeration of $\str$). Now $F_{x}\in \F(\cs)$ is defined by $F_{x}(y) = z$ if and
only if $z$ is the result of removing the $2$'s from the sequence
$\ell_{x}(y\restriction 1), \ell_{x}(y\restriction 2),
\ell_{x}(y\restriction 3), \dotsc$.  Note that every $F\in\F(\cs)$ has 
infinitely many codes.

\begin{definition}\label{def:RCF-defn}
  A function $F\in \F(\cs)$ is a \emph{random continuous function} if
  $F=F_{x}$ for some $x \in \MLR_{\lambda}$.
\end{definition}

\begin{remark}\label{rmk-rcfs}
  $F_{x}$ is continuous (on its domain) because it is computable
  relative to some oracle, namely $x$.  Since $\cs$ is compact and
  Hausdorff, it follows that $F_x$ is a closed map and hence that
  $\ran(F)$ is $\Pi^{0,F}_1$.
\end{remark}

We will make use of the following facts about random continuous 
functions.

\begin{theorem}[\cite{Barmpalias:2008aa}]\label{thm-rcfs-image-of-comp}
  If $F\in\F(\cs)$ is random and $x\in\cs$ is computable, then
  $F(x)\in\cs$ is random.
\end{theorem}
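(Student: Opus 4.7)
The plan is to apply preservation of randomness to the ``evaluation at $x$'' map on codes. Write $F = F_y$ for some $y \in \MLR_\lambda \cap 3^\omega$ and define the partial map $\Phi_x \colon \subseteq 3^\omega \to 2^\omega$ by $\Phi_x(y') = F_{y'}(x)$. Since $x$ is computable, $\Phi_x$ is computable on its domain: given $y'$ as oracle, one walks down the labeled tree along $x$, reading labels $\ell_{y'}(x\uh 1), \ell_{y'}(x\uh 2), \dots$ and deleting every $2$ to output bits of $\Phi_x(y')$. The domain of $\Phi_x$ consists of those $y'$ for which infinitely many labels along $x$ lie in $\{0,1\}$.

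Next I would show $\lambda(\dom(\Phi_x)) = 1$. Because $(\sigma_i)_{i \in \omega}$ is an enumeration without repetition, the sequence of positions of $y'$ used by the labels $\ell_{y'}(x\uh k)$ consists of distinct indices. Under $\lambda$, these label-values are therefore i.i.d.\ uniform on $\{0,1,2\}$, so by Borel--Cantelli (or just by a direct computation of the probability of all-2's from some point on) the set of $y'$ yielding only finitely many non-$2$ labels has $\lambda$-measure zero.

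I would then identify the pushforward $\lambda_{\Phi_x}$. For a fixed $\sigma \in \str$, the event $\Phi_x(y') \succ \sigma$ occurs when the first $|\sigma|$ labels in $\{0,1\}$ along $x$ equal $\sigma(0), \dots, \sigma(|\sigma|-1)$ in order. Since the label-sequence along $x$ is i.i.d.\ uniform on $\{0,1,2\}$ and conditioning on a label being non-$2$ yields the uniform distribution on $\{0,1\}$, the ``rejected 2's'' interpretation shows that the output bits are i.i.d.\ uniform on $\{0,1\}$, giving $\lambda_{\Phi_x}(\llb\sigma\rrb) = 2^{-|\sigma|}$. Hence $\lambda_{\Phi_x} = \lambda$ on $2^\omega$.

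Now Theorem~\ref{thm:PoR-and-NReN}(i) applies: since $y \in \MLR_\lambda$, we conclude $F(x) = \Phi_x(y) \in \MLR_{\lambda_{\Phi_x}} = \MLR_\lambda$, which is the desired conclusion. The only slightly delicate step is the pushforward calculation; everything else is bookkeeping with the coding and a standard Borel--Cantelli argument for the domain.
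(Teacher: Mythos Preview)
Your argument is correct. The evaluation map $\Phi_x\colon 3^\omega\to 2^\omega$ is computable when $x$ is computable, the Borel--Cantelli reasoning gives $\lambda(\dom(\Phi_x))=1$, and the rejection-sampling observation that deleting the $2$'s from an i.i.d.\ uniform $\{0,1,2\}$-sequence yields an i.i.d.\ uniform $\{0,1\}$-sequence establishes $\lambda_{\Phi_x}=\lambda$; preservation of randomness then finishes the job.

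Note, however, that the paper does not supply its own proof of this theorem: it is quoted from \cite{Barmpalias:2008aa} as background, so there is nothing in the present paper to compare your argument against directly. That said, your approach is very much in the spirit of the paper---indeed, the whole point of Section~\ref{sec-prelim} is to show that preservation of randomness and no randomness ex nihilo simplify results of this kind, and your proof is exactly the ``preservation'' reproof one would expect the authors to endorse. The original proof in \cite{Barmpalias:2008aa} proceeds differently, by a more hands-on construction of a Martin-L\"of test for the code $y$ from a test capturing $F_y(x)$; your route is cleaner and requires no test manipulation beyond invoking Theorem~\ref{thm:PoR-and-NReN}.
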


\begin{theorem}[\cite{Barmpalias:2008aa}]\label{thm-rcfs-total}
  If $F\in\F(\cs)$ is random, then $F$ is total.
\end{theorem}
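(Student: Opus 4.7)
The plan is to show that the set of codes $x\in 3^\omega$ for which $F_x$ fails to be total is contained in a countable union of $\Pi^0_1$ null classes, and then invoke Proposition~\ref{prop-pi01-null}. Unwinding the definition, $F_x(y)$ is undefined exactly when the label sequence $\ell_x(y\uh 1),\ell_x(y\uh 2),\ldots$ contains only finitely many non-$2$ entries. Equivalently, there is some $\sigma\prec y$ such that every edge of the path $y$ strictly below $\sigma$ is labeled $2$ under $\ell_x$. So for each $\sigma\in\str$, I would define
\[
A_\sigma = \{x\in 3^\omega : \exists\, y\succ \sigma \text{ with } \ell_x(y\uh k)=2 \text{ for all } k>|\sigma|\},
\]
and observe that $\{x : F_x\text{ not total}\}=\bigcup_{\sigma\in\str} A_\sigma$.

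Next I would show that each $A_\sigma$ is $\Pi^0_1$. For fixed $\sigma$, let $T(x,\sigma)=\{\tau\succeq\sigma : \ell_x(\tau\uh k)=2 \text{ for all } |\sigma|<k\leq|\tau|\}$; this is a computable, finite-branching tree (in fact, each membership check depends on a single coordinate of $x$). By K\"onig's lemma, $A_\sigma$ is the set of $x$ for which $T(x,\sigma)$ is infinite, which is a $\Pi^0_1$ condition on $x$.

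The main obstacle is to show $\lambda(A_\sigma)=0$, and this is where I expect the real work to lie. Since $x$ is distributed as $\lambda$ on $3^\omega$, the edge labels $\ell_x(\tau)$ are i.i.d.\ uniform on $\{0,1,2\}$, so $T(x,\sigma)$ is a Galton--Watson tree whose offspring distribution is Binomial$(2,1/3)$ with mean $2/3<1$. Being subcritical, this branching process goes extinct almost surely, so $\lambda(A_\sigma)=0$. If one wants to avoid invoking branching process theory, the same fact follows from a direct calculation: letting $q_n$ denote the probability that $T(x,\sigma)$ has depth at least $n$, one checks that $q_0=1$ and $q_{n+1}=1-(1-q_n/3)^2$, and an easy induction shows $q_n$ is decreasing with limit equal to the smaller non-negative fixed point of $q=1-(1-q/3)^2$, which is $q=0$.

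Putting the pieces together: each $A_\sigma$ is a $\Pi^0_1$ nullset, so by Proposition~\ref{prop-pi01-null}, $A_\sigma\cap\MLR_\lambda=\emptyset$. Taking the countable union over $\sigma\in\str$, no $\lambda$-random $x$ lies in any $A_\sigma$, and hence $F_x$ is total whenever $x\in\MLR_\lambda$.
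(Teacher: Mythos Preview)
The paper does not actually supply a proof of Theorem~\ref{thm-rcfs-total}; it is quoted from \cite{Barmpalias:2008aa} without argument. So there is nothing in the present paper to compare your proposal against.

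That said, your argument is correct and complete. The decomposition $\{x:F_x\text{ not total}\}=\bigcup_{\sigma}A_\sigma$ is right: $F_x(y)\diverge$ exactly when some tail of the label sequence along $y$ is identically $2$, and the witness $\sigma$ marks the point after which this happens. Each $A_\sigma$ is indeed $\Pi^0_1$, since ``$T(x,\sigma)$ has a node at depth $n$'' is a clopen condition on $x$ (it depends on the finitely many coordinates of $x$ indexing strings of length at most $|\sigma|+n$ extending $\sigma$), and $A_\sigma$ is the intersection of these over $n$. The measure computation is also fine: the labels $\ell_x(\tau)$ are i.i.d.\ uniform on $\{0,1,2\}$, so $T(x,\sigma)$ is a Galton--Watson tree with offspring law $\mathrm{Bin}(2,1/3)$, which is subcritical and hence dies out a.s.; your recursion $q_{n+1}=1-(1-q_n/3)^2$ with unique nonnegative fixed point $0$ gives the same conclusion elementarily. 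Proposition~\ref{prop-pi01-null} then kills each $A_\sigma$ for random $x$, and the countable union finishes it.

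For what it is worth, the original proof in \cite{Barmpalias:2008aa} follows essentially the same outline: one shows the set of codes of non-total functions is a countable union of null $\Pi^0_1$ classes via a branching-process style estimate. Your write-up is a clean rendering of that idea.
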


\subsection{Algorithmically random measures on
  $\cs$} \label{subsec-random-measures} Let $\mathscr{P}(\cs)$ be the
space of probability measures on $\cs$.  Given $x\in\cs$, the
$n$-th column $x_n$ of $x$ is defined by $x_n(k)=1$ if and
only if $x(\langle n, k\rangle) = 1$, where $\langle n, k\rangle$ is
some fixed computable bijection between $\omega^2$ and $\omega$. We
write $x = \oplus_{n\in\omega}x_n$. We define a map $\Psi:\cs\rightarrow\mathscr{P}(\cs)$ that
sends a real $x$ to the measure $\mu_x$ satisfying (i)
$\mu_x(\epsilon)=1$ and (ii) $\mu_x(\sigma_n0)=x_n \cdot
\mu_x(\sigma_n)$, where $x_n$ is the real number corresponding to the
$n$-th column of $x$ and $\sigma_n$ is the $n$-th element 
in the standard enumeration of $\cs$.  This coding was first given in \cite{Culver:2014aa}.

\begin{definition}\label{defn-random-measure}
  A measure $\mu\in \mathscr{P}(\cs)$ is a \emph{random measure} if
  $\mu=\mu_{x}$ for some $x \in \MLR_{\lambda}$.
\end{definition}

Let $P$ be the pushforward measure on $\mathscr{P}(\cs)$ induced by
$\lambda$ and $\Psi$.  Then we have the following.

\begin{theorem}\cite{Culver:2014aa}
  Let $\nu\in\mathscr{P}(\cs)$.  Then $\nu\in\MLR_P$ if and only if
  $\nu=\mu_x$ for some $x\in\MLR_\lambda$.
\end{theorem}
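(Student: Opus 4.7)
The plan is to identify the theorem as a direct application of Preservation of Randomness and No Randomness Ex Nihilo (Theorem~\ref{thm:PoR-and-NReN}) to the coding map $\Psi\colon\cs\to\mathscr{P}(\cs)$, $x\mapsto\mu_x$, whose pushforward of $\lambda$ is, by the definition of $P$, precisely $P$ itself. Once $\Psi$ is known to be total and computable, the biconditional falls out as two one-line citations.

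First I would verify that $\Psi$ is total and computable, where $\mathscr{P}(\cs)$ carries its standard effective structure (a measure $\mu$ is represented by the uniformly computable values $\mu(\llb\sigma_n\rrb)$, $n\in\omega$). The recursive defining equations $\mu_x(\sigma_n 0) = x_n\cdot\mu_x(\sigma_n)$ and $\mu_x(\sigma_n 1) = (1-x_n)\cdot\mu_x(\sigma_n)$ let us approximate $\mu_x(\sigma)$ to arbitrary precision using only finitely many bits from finitely many columns of $x$, so $\mu_x(\sigma)$ is a computable real uniformly in $x$ and $\sigma$. Totality is immediate because every column $x_n$ corresponds to a real in $[0,1]$, so the recursion always produces a bona fide probability measure.

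With $\Psi$ in hand, the $(\Leftarrow)$ direction is preservation of randomness: if $x\in\MLR_\lambda$, then $\mu_x=\Psi(x)\in\MLR_{\lambda_\Psi}=\MLR_P$. The $(\Rightarrow)$ direction is no randomness ex nihilo: if $\nu\in\MLR_P=\MLR_{\lambda_\Psi}$, then there exists $x\in\MLR_\lambda$ with $\Psi(x)=\nu$, i.e., $\nu=\mu_x$.

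The only substantive point is that Theorem~\ref{thm:PoR-and-NReN} is stated for computable maps $\cs\to\cs$, while $\Psi$ lands in $\mathscr{P}(\cs)$. This is the main obstacle, but it is a cosmetic one: both principles have identical analogues for computable maps between effective Polish spaces (with the same proofs, since nothing in the argument used special features of the codomain $\cs$ beyond its effective Polish structure), and one may equivalently compose $\Psi$ with the effective embedding of $\mathscr{P}(\cs)$ into $\cs$ that records binary expansions of $\mu(\llb\sigma_n\rrb)$ for $n\in\omega$. Either reduction is routine, and the theorem then drops out.
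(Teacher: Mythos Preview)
The paper does not give its own proof of this theorem; it is stated with a citation to \cite{Culver:2014aa} and left unproved here. Your approach---applying Preservation of Randomness and No Randomness Ex Nihilo (Theorem~\ref{thm:PoR-and-NReN}) to the total computable coding map $\Psi\colon\cs\to\mathscr{P}(\cs)$, whose pushforward of $\lambda$ is by definition $P$---is correct and is exactly the template the paper uses for its other bridge results (e.g., Theorems~\ref{thm-members-RCS} and~\ref{thm:zeros-rcf}). Your handling of the codomain issue is also right: the abstract itself states the two principles for computable maps from an effectively compact probability space into an arbitrary effective Polish space, and the proof of Theorem~\ref{thm:PoR-and-NReN} uses nothing about $\cs$ as codomain beyond that structure, so the extension (or the reduction via a representation of $\mathscr{P}(\cs)$ in $\cs$) is routine.
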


The support of a measure $\mu$ on $\cs$ is defined to be
\[
\Supp(\mu)=\{x\in\cs:(\forall n)[\mu(x\uh n)>0]\}
\]
It is not hard to see that $\Supp(\mu)=\cs$ for every random measure.

In \cite{Culver:2014aa}, it was shown, among other results, that
random measures are atomless (that is, $\mu(\{x\})=0$ for every $x\in\cs$) 
and that the reals that are random with respect to some random measure are 
precisely the reals in
$\MLR_{\lambda}$.


\section{Applications of Randomness Preservation and No Randomness Ex Nihilo}\label{sec-prelim}

In this section, we demonstrate the usefulness of preservation of
randomness and the no randomness ex nihilo principle in the study of
algorithmically random objects such as closed sets, continuous
functions, and so on.

As a warm-up, we provide a new, simpler proof of a known result from
\cite{Barmpalias:2007aa}.

\begin{theorem}\label{thm-members-RCS}
  Every random closed set contains an element of $\MLR_{\lambda}$ and
  every element of $\MLR_{\lambda}$ is contained in some random closed
  set.
\end{theorem}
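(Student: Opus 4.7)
My approach is to construct one computable, total map $\Phi\colon 3^\omega\times\cs\to\cs$ that delivers both halves of the theorem via one application each of preservation of randomness and no randomness ex nihilo. Define $\Phi$ as follows: given $(x,y)$, trace out a path through $T_x$ starting at $\emptystr$; at a node with exactly one child in $T_x$, follow that child; at a node where both children are in $T_x$, go left or right according to the next unused bit of $y$. Since the coding of $T_x$ has no dead ends, this procedure never stalls, $\Phi$ is computable and total, and $\Phi(x,y)\in[T_x]$ by construction.

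The crux of the argument, and the main technical point, is that the pushforward $(\lambda\otimes\lambda)_\Phi$ equals Lebesgue measure $\lambda$ on $\cs$. I would verify this by induction on $|\tau|$. The label of the current node is a bit of $x$ at an index $J$ that is determined entirely by labels previously consulted along the path; in particular $x(J)$ is a genuinely ``fresh'' coordinate of $x$, independent of the traversal history and of the $y$-bits used so far. Since the $y$-bit consulted at a branching step is also fresh, conditioning on ``$\Phi(x,y)\succeq\tau$'' leaves both the current label and the next $y$-bit uniform and independent, whence
\[
\Pr\bigl[\Phi(x,y)(|\tau|)=0\,\bigm|\,\Phi(x,y)\succeq\tau\bigr]=\tfrac13+\tfrac13\cdot\tfrac12=\tfrac12,
\]
and symmetrically for $1$; by induction $(\lambda\otimes\lambda)_\Phi(\llb\tau\rrb)=2^{-|\tau|}$ for every $\tau\in\str$. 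Carrying out this freshness bookkeeping cleanly is the main obstacle.

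With the pushforward identified, both assertions follow quickly, applying the preservation and no randomness ex nihilo principles to $\Phi$ on its product domain $3^\omega\times\cs$ (either directly, as these principles extend to any effectively compact computable space, or after the obvious computable recoding to $\cs$). For the first, if $[T_x]$ is a random closed set then $x\in\MLR_\lambda$; picking any $y\in\MLR_\lambda^x$ (which exists relative to every oracle), van Lambalgen's theorem (Theorem~\ref{thm-vlt}) gives $(x,y)\in\MLR_{\lambda\otimes\lambda}$, so preservation of randomness (Theorem~\ref{thm:PoR-and-NReN}(i)) yields $\Phi(x,y)\in\MLR_\lambda$, and by construction $\Phi(x,y)\in[T_x]$. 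For the second, given $z\in\MLR_\lambda$ apply no randomness ex nihilo (Theorem~\ref{thm:PoR-and-NReN}(ii)) to get $(x,y)\in\MLR_{\lambda\otimes\lambda}$ with $\Phi(x,y)=z$; van Lambalgen then gives $x\in\MLR_\lambda$, so $[T_x]$ is a random closed set containing $z$.
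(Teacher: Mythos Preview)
Your proposal is correct and follows essentially the same approach as the paper: define a path-tracing map on (code of closed set, auxiliary randomness) pairs, verify its pushforward is Lebesgue via the $\tfrac13+\tfrac13\cdot\tfrac12=\tfrac12$ computation, and conclude via preservation, no randomness ex nihilo, and van Lambalgen. The only cosmetic differences are that the paper phrases the domain as $\C(\cs)\times\cs$ rather than $3^\omega\times\cs$, and consults the bit $y(n)$ indexed by the current depth rather than the next unused bit of $y$; neither affects the argument.
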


\begin{proof}
  We define a computable map $\Phi\colon \C(\cs) \times \cs \to \cs$ that
  pushes forward the product measure $\lambda_{\C} \otimes \lambda$ to
  $\lambda$ and satisfies $\Phi(C,x)\in C$ for every pair $(C,x)\in
  \C(\cs) \times \cs$. Having done this, preservation of randomness
  and no randomness ex nihilo imply that the image of a
  $(\lambda_{\C}\otimes \lambda)$-random pair is $\lambda$-random and
  any $\lambda$-random is the image of some $(\lambda_{\C} \otimes
  \lambda)$-random pair. The result then follows because by Van
  Lambalgen's theorem (Theorem \ref{thm-vlt}), a pair $(C,x)$ is $(\lambda_{\C} \otimes
  \lambda)$-random if and only if $C$ is $\lambda_{\C}$-random and $x$
  is $\lambda$-random relative to $C$.

  The map $\Phi$ provides a path through $C$ (when viewed as the paths through a tree) by using $x$ to tell us which way to go 
  whenever we encounter a branching node. Specifically,
  having $\Phi(C,x)\restriction n = \sigma$ such that $\llb\sigma\rrb \cap C
  \neq \emptyset$, we define $\Phi(C,x)(n) =0$ if $\llb\sigma 1\rrb \cap C =
  \emptyset$ and $\Phi(C,x)(n)=1$ if $\llb\sigma 0\rrb \cap C = \emptyset$. If
  neither $\llb\sigma 0\rrb \cap C = \emptyset$ nor $\llb\sigma 1\rrb \cap C =
  \emptyset$, then we define $\Phi(C,x)(n) = x(n)$.

  The map $\Phi$ is clearly computable. It pushes $\lambda_{\C} \otimes
  \lambda$ forward to $\lambda$ because if $\Phi$ has output $\sigma \in
  2^{n}$ after $n$ steps, then $\Phi$ outputs a next bit of $0$ if and only if either
  $\llb\sigma 1\rrb \cap C = \emptyset$ or both $\llb\sigma 1\rrb\cap C$ and
$\llb\sigma 0\rrb \cap C$ are non-empty and $x(n)=0$. The former happens
  with probability $\frac{1}{3}$, and the latter happens with
  probability $\frac{1}{6}=\frac{1}{3}\cdot \frac{1}{2}$ by independence. The
  proof is now complete since $\frac{1}{3} + \frac{1}{6} =
  \frac{1}{2}$.
\end{proof}

Let $F\in\F(\cs)$.  We define the zeros of $F$ to be $Z_{F}=\{x:F(x)=0^\omega\}$, which is clearly a
closed subset of $\cs$.  In \cite{Barmpalias:2008aa}, the following
was shown.

\begin{theorem}[\cite{Barmpalias:2008aa}]\label{thm:zeros}
  Let $F\in\F(\cs)$ be random.  Then $Z_{F}$ is a random closed set
  provided it is non-empty.
\end{theorem}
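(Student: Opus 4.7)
The plan is to apply Preservation of Randomness (Theorem~\ref{thm:PoR-and-NReN}) in the style of the proof of Theorem~\ref{thm-members-RCS}: construct a computable map whose pushforward forces the RACS code of $Z_{F_x}$ to be $\lambda$-random, so that Definition~\ref{def:closed-set-defn} identifies $Z_{F_x}$ as a random closed set.

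The central computation is a Galton--Watson analysis of the tree $T_x := \{\sigma \in 2^{<\omega} : \ell_x(\tau) \in \{0,2\}\text{ for all } \emptyset\neq\tau\preceq\sigma\}$, whose infinite paths constitute $Z_{F_x}$. Under $\lambda$ on $3^\omega$ the labels $\ell_x$ are i.i.d.\ uniform on $\{0,1,2\}$, so each edge survives in $T_x$ independently with probability $2/3$. The extinction equation $q = (1/3 + (2/3)q)^2$ has smallest solution $q = 1/4$, so $Z_{F_x} \neq \emptyset$ with probability $3/4$. Moreover, conditional on a node being extendible in $T_x$, each of ``only the left child extendible,'' ``only the right child extendible,'' and ``both children extendible'' has conditional probability $1/3$ --- exactly the distribution that the RACS coding puts on $3^\omega$. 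So the map $x \mapsto$ (RACS code of $Z_{F_x}$) pushes $\lambda$ restricted to $\{Z_{F_x}\neq\emptyset\}$ forward to $(3/4)\lambda$ on $3^\omega$.

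The naive choice $\Phi\colon 3^\omega\to 3^\omega$, $x\mapsto$ (RACS code of $Z_{F_x}$), would then yield the theorem by Preservation of Randomness; the obstacle is that computing this $\Phi$ requires deciding extendibility in $T_x$, which is only $\Pi^{0,x}_1$. To sidestep this I will enlarge the domain to $3^\omega\times 3^\omega$ and construct a computable $\Phi(x,w)$ whose auxiliary $\lambda$-random input $w$ drives a randomized exploration of $T_x$ producing the canonical RACS code of $Z_{F_x}$ almost surely. If the pushforward of $\lambda\otimes\lambda$ under $\Phi$ gives $\lambda$ on the non-emptiness event, then Preservation yields $\Phi(x,w)\in\MLR_\lambda$ for $(\lambda\otimes\lambda)$-random pairs, and van Lambalgen's theorem (Theorem~\ref{thm-vlt}) supplies, for each $\lambda$-random $x$ with $Z_{F_x}\neq\emptyset$, a $w$ making $(x,w)$ random for the product. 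The output $\Phi(x,w)$ is then a $\lambda$-random RACS code for $Z_{F_x}$, which by Definition~\ref{def:closed-set-defn} exhibits $Z_{F_x}$ as a random closed set.

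The hard part is engineering the randomized exploration $\Phi(x,w)$ so that the output equals the correct RACS code on a $(\lambda\otimes\lambda)$-full-measure set, despite extendibility being $\Pi^{0,x}_1$ in $x$; the bits of $w$ must be used in a measure-preserving way that honestly reflects the Galton--Watson conditional distribution computed above, so that the local decisions made by $\Phi$ agree with true extendibility off a null set.
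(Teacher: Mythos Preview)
Your Galton--Watson analysis is correct, and you have correctly identified the obstacle: the map $x\mapsto(\text{RACS code of }Z_{F_x})$ into $3^\omega$ is not computable because extendibility in $T_x$ is genuinely $\Pi^{0,x}_1$. However, the proposed fix --- introducing an auxiliary $\lambda$-random $w$ and having $\Phi(x,w)$ output the canonical $3^\omega$ RACS code almost surely --- cannot work, and this is where the proposal has a real gap.

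The reason is that the target value depends only on $x$: on the non-emptiness event you are asking for $\Phi(x,w)$ to equal a fixed string $c(x)\in 3^\omega$ for $\lambda$-almost every $w$. Pick (by Fubini) an $x_0$ of this kind whose first RACS digit is $2$ (both root subtrees of $T_{x_0}$ infinite), and a $w_0$ with $\Phi(x_0,w_0)(0)=2$. Computability of $\Phi$ gives a finite stage $s$ at which the first output digit is already committed from $(x_0\uh s,w_0\uh s)$. But no finite prefix of $x_0$ can force the left root subtree to be infinite, so the cylinder $\llb x_0\uh s\rrb\times\llb w_0\uh s\rrb$ contains a positive-measure set of pairs $(x,w)$ for which the left subtree of $T_x$ is finite while the right is infinite; for these, $Z_{F_x}\neq\emptyset$ and the correct first RACS digit is $1$, yet $\Phi(x,w)(0)=2$. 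So $\Phi$ disagrees with the canonical code on a set of positive measure, contradicting your requirement. Auxiliary randomness does not help here precisely because the intended output is $w$-independent; you are still trying to compute a non-computable function of $x$.

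The paper resolves the obstacle differently: instead of targeting the dead-end-free $3^\omega$ code, it maps $x$ to the $4^\omega$ code of $T_x$ itself (allowing dead ends). That map \emph{is} computable --- one simply reads off, for each node already placed in $T_x$, which of its children get labels in $\{0,2\}$ --- and a direct calculation shows it pushes $\lambda$ on $3^\omega$ forward to the Galton--Watson measure $\mu_{GW}$ on $4^\omega$ (offspring probabilities $2/9,2/9,4/9,1/9$). Preservation of randomness then yields that $T_x$ is a $\mu_{GW}$-random tree whenever $x\in\MLR_\lambda$. The passage from ``$\mu_{GW}$-random tree with dead ends'' to ``random closed set in the sense of Definition~\ref{def:closed-set-defn}'' is exactly the content of the Diamondstone--Kj{\o}s-Hanssen result (Lemma~\ref{lemma:DiamKjosGWtree}), which absorbs the non-computable pruning step. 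In short: rather than trying to prune computably, the paper changes the target coding so that no pruning is needed, and invokes an external lemma that handles the equivalence of the two notions of randomness for closed sets.
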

In \cite{Barmpalias:2008aa} it was conjectured that the converse also
holds, but this was left open.  We show that this is the case.  To do
so, we provide a new proof of Theorem \ref{thm:zeros}, from which the
converse follows immediately.  We also make use of an alternative
characterization of random closed sets due to Diamondstone and
Kj{\o}s-Hanssen \cite{Diamondstone:2012aa}.

Just as a binary tree with no dead ends is coded by a sequence in
$3^{\omega}$ (see the paragraph preceding
Definition~\ref{def:closed-set-defn}), an arbitrary binary tree is
coded by a sequence in $4^{\omega}$, except now a $3$ at a node
indicates that the tree is dead above that node. That is, given $x\in
4^{\omega}$, we define a tree $S_{x}\subseteq \str$ inductively as
follows. First $\epsilon$, the empty string, is  included in
$S_{x}$ by default. Now suppose that $\sigma\in S_{x}$ is the $(i+1)$-st
extendible node in $S_x$. Then
\begin{itemize}
  \item ${\sigma}^{\frown} 0 \in S_{x}$ and ${\sigma}^{\frown} 1
  \notin S_{x}$ if $x(i)=0$;
  \item ${\sigma}^{\frown} 0 \notin S_{x}$ and ${\sigma}^{\frown}
  1 \in S_{x}$ if $x(i)=1$;
  \item ${\sigma}^{\frown} 0 \in S_{x}$ and ${\sigma}^{\frown} 1
  \in S_{x}$ if $x(i)=2$;
  \item ${\sigma}^{\frown} 0 \notin S_{x}$ and ${\sigma}^{\frown}
  1 \notin S_{x}$ if $x(i)=3$.
\end{itemize}
This coding can be thought of as a labeling of the nodes of
$2^{\omega}$ by the digits of $x$; a $0$ at a node means that only the 
left branch is included, a $1$ means that only the  right branch is included, 
a $2$ means that both branches are included, and a $3$ means that neither 
branch is included. Note that every tree except $2^{<\omega}$ itself has infinitely 
many codes.

Let $\mu_{GW}$ be the measure on $4^{\omega}$ induced by setting, for
each $\sigma\in 4^{<\omega}$,
\begin{displaymath}
  \mu_{GW}(\sigma0\mid\sigma)=\mu_{GW}(\sigma1\mid\sigma)=2/9,\;
  \mu_{GW}(\sigma2\mid\sigma)=4/9,\;\text{and}\; \mu_{GW}(\sigma3\mid\sigma)=1/9.
\end{displaymath}

Via this coding we can also think of $\mu_{GW}$ as a measure on
$\Tree$, the space of binary trees. Then the probability of extending
a string in a tree by only $0$ is $2/9$, by only $1$ is $2/9$, by both
$0$ and $1$ is $4/9$, and by neither is $1/9$. We call a tree $T$
\emph{GW-random} if it has a random code; i.e.\ if there is $x\in
\MLR_{\mu_{GW}}$ such that $T=S_{x}$ (here $GW$ stands for Galton-Watson,
since $GW$-trees are obtained by a Galton-Watson process).



\begin{lemma}[Diamondstone and Kj{\o}s-Hanssen \cite{Diamondstone:2012aa}]
  A closed set $C$ is random if and only if $C$ is the set of paths
  through an infinite GW-random tree. \label{lemma:DiamKjosGWtree}
\end{lemma}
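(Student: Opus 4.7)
My plan is to produce both directions at once by exhibiting a single computable map $\Phi\colon \ccs\times\cs\to 4^\omega$ that (i) satisfies $[S_{\Phi(x,z)}]=[T_x]$ for every pair $(x,z)$, and (ii) pushes $\lambda\otimes\lambda$ forward to the conditional measure $\mu_{GW}^{*}:=\mu_{GW}(\cdot\mid A)$, where $A=\{y\in 4^\omega:S_y\text{ is infinite}\}$. Granted such a $\Phi$, Theorem~\ref{thm:PoR-and-NReN} together with Van Lambalgen's theorem (Theorem~\ref{thm-vlt}) gives both implications formally.

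The structural input behind $\Phi$ is the branching property of Galton--Watson trees. First, solving $q=\tfrac{2}{9}q+\tfrac{2}{9}q+\tfrac{4}{9}q^2+\tfrac{1}{9}$ yields extinction probability $q=1/4$, so $\mu_{GW}(A)=3/4$; note also that $A$ is a $\Pi^{0}_{1}$ class in $4^\omega$, since the clause ``$S_y$ has some node of length $n$'' depends on only finitely many bits of $y$ and is clopen. Next, Bayes' rule applied to the branching step shows that under $\mu_{GW}^{*}$ the three configurations of non-extinct children at any extendible node (left only, right only, both) each occur with probability $1/3$, and the non-extinct subtrees are independent copies of $\mu_{GW}^{*}$. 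Thus stripping dead branches pushes $\mu_{GW}^{*}$ forward to the Lebesgue measure on $\ccs$ under the $T_x$ coding, and $\Phi$ should be a computable converse of this stripping. Concretely, $\Phi$ writes out the $4^\omega$-code of the tree obtained by augmenting $T_x$ with dying sibling subtrees: at each node of $T_x$ that has only one $T_x$-child, $\Phi$ uses bits of $z$ to flip a biased coin of bias $1/3$ and, on success, sprouts at the missing sibling a dying subtree generated by further bits of $z$ as an independent $\mu_{GW}$-conditional-on-extinction tree (whose branching probabilities $(2/9,2/9,1/9,4/9)$ are forced by Bayes' rule from $q=1/4$). Each step is a standard computable transformation of Lebesgue measure, and by construction $[S_{\Phi(x,z)}]=[T_x]$ because every augmenting subtree is finite.

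With $\Phi$ in hand, both directions drop out after noting the standard fact $\MLR_{\mu_{GW}^{*}}=A\cap\MLR_{\mu_{GW}}$, which is valid because $A$ is a positive-measure $\Pi^{0}_{1}$ class. For $(\Rightarrow)$: if $C=[T_x]$ with $x\in\MLR_\lambda$, pick $z\in\MLR_\lambda^{x}$, invoke Van Lambalgen to get $(x,z)\in\MLR_{\lambda\otimes\lambda}$, and apply preservation of randomness to conclude $\Phi(x,z)\in\MLR_{\mu_{GW}}\cap A$. For $(\Leftarrow)$: if $y\in\MLR_{\mu_{GW}}$ with $S_y$ infinite, then $y\in\MLR_{\mu_{GW}^{*}}$, so no randomness ex nihilo yields $(x,z)\in\MLR_{\lambda\otimes\lambda}$ with $\Phi(x,z)=y$; Van Lambalgen gives $x\in\MLR_\lambda$, and $[T_x]=[S_y]$ exhibits $[S_y]$ as a random closed set. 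The main obstacle is item~(ii): one has to pin down the extinction-conditioned branching law and verify that the node-by-node construction really outputs a sequence distributed as $\mu_{GW}^{*}$ rather than a measure with slightly perturbed parameters; everything else is a formal application of the tools already developed in the paper.
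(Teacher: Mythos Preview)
The paper does not prove this lemma; it is quoted from \cite{Diamondstone:2012aa} and used as a black box in the proof of Theorem~\ref{thm:zeros-rcf}. So there is no proof in the present paper to compare your argument against.

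That said, your proposal is sound and is very much in the spirit of the paper's own methodology: you reduce the statement to a single application of preservation of randomness and no randomness ex nihilo via a well-chosen computable map, exactly as the paper does for Theorems~\ref{thm-members-RCS} and~\ref{thm:zeros-rcf}. The probabilistic ingredients you list are all correct: the extinction probability is $q=1/4$; conditioned on survival the three surviving-children configurations at an extendible node are equiprobable; at a one-surviving-child node a dead sibling is present with conditional probability $1/3$; and the extinction-conditioned offspring law is $(2/9,2/9,1/9,4/9)$. Together these show that your augmentation map pushes $\lambda\otimes\lambda$ forward to $\mu_{GW}^{*}$, and the identification $\MLR_{\mu_{GW}^{*}}=A\cap\MLR_{\mu_{GW}}$ holds for the reason you give (with $\mu_{GW}(A)=3/4$ a computable number).

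One imprecision is worth flagging. You assert $[S_{\Phi(x,z)}]=[T_x]$ for \emph{every} pair $(x,z)$, but this is only true when every sprouted subtree happens to be finite, which is a measure-one (not a sure) event. What you actually need, and what is true, is that it holds whenever $z\in\MLR_\lambda^{x}$: for each fixed $\tau\in 2^{<\omega}$, the event ``$\tau\in S_{\Phi(x,z)}\setminus T_x$ and $\tau$ is infinitely extendible in $S_{\Phi(x,z)}$'' is a $\Pi^{0,x}_1$ null set in $z$ (the first clause is clopen, the second is $\Pi^{0,x}_1$, and the subtree below such a $\tau$ is an independent extinction-conditioned, hence subcritical, Galton--Watson tree), so by Proposition~\ref{prop-pi01-null} relativized it is avoided by any $z\in\MLR_\lambda^{x}$. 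Since there are only countably many $\tau$, no sprouted subtree is infinite for such $z$, and both directions of your argument go through. You should state the equality in this almost-everywhere form.
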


By means of Lemma \ref{lemma:DiamKjosGWtree} we prove:

\begin{theorem}\label{thm:zeros-rcf}
  \begin{enumerate}[(i)]
    \item For every random $F\in\F(\cs)$, $Z_F$ is a random closed set
    provided that it is non-empty. \label{it:zeros-rcfi}
    \item For every random $C\in\C(\cs)$, there is some random
    $F\in\F(\cs)$ such that $C=Z_F$. \label{it:zeros-rcfii}
  \end{enumerate}
\end{theorem}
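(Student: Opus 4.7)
The plan is to build a computable map $\Phi \colon 3^\omega \to 4^\omega$ sending each $x$ to a canonical $4^\omega$-code of the tree $T^*_x \eqdef \{\sigma \in \str : \ell_x(\sigma \uh k) \in \{0,2\}\text{ for all }1 \le k \le |\sigma|\}$, and to verify that $\lambda_\Phi = \mu_{GW}$. Once this is in place, both halves of the theorem fall out of preservation of randomness, no randomness ex nihilo, and Lemma~\ref{lemma:DiamKjosGWtree}.

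The key preliminary observation is that $Z_{F_x} = [T^*_x]$ for every random $F_x$: by Theorem~\ref{thm-rcfs-total} such $F_x$ is total, so $F_x(y) = 0^\omega$ iff every label $\ell_x(y \uh k)$ lies in $\{0,2\}$, with totality forcing the induced all-zero image to be infinite. In particular, $Z_{F_x} \neq \emptyset$ iff $T^*_x$ is infinite.

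The map $\Phi$ enumerates the nodes of $T^*_x$ in length-lexicographic order and, at each node $\sigma$, outputs $0$, $1$, $2$, or $3$ according to whether only $\sigma^\frown 0$, only $\sigma^\frown 1$, both, or neither of $\sigma$'s children lies in $T^*_x$. Since distinct nodes of $T^*_x$ have disjoint pairs of child-edges, the digits are determined by disjoint pairs of coordinates of $x$; each coordinate is uniform on $\{0,1,2\}$ under $\lambda$, so the digits are independent with probabilities $2/9, 2/9, 4/9, 1/9$ for $0, 1, 2, 3$, exactly matching the conditional probabilities defining $\mu_{GW}$. This yields $\lambda_\Phi = \mu_{GW}$.

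For (i), a random $F_x$ with $Z_{F_x} \neq \emptyset$ has infinite $T^*_x$, so preservation of randomness gives $\Phi(x) \in \MLR_{\mu_{GW}}$, making $T^*_x$ an infinite $GW$-random tree; Lemma~\ref{lemma:DiamKjosGWtree} then yields $Z_{F_x} = [T^*_x]$ as a random closed set. For (ii), given a random $C \in \C(\cs)$, Lemma~\ref{lemma:DiamKjosGWtree} produces an infinite $GW$-random tree $T$ with $[T] = C$ and a code $y \in \MLR_{\mu_{GW}}$; no randomness ex nihilo then gives $x \in \MLR_\lambda$ with $\Phi(x) = y$, so $T^*_x = T$ and $Z_{F_x} = C$. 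The main care is in the pushforward computation, in tracking which coordinates of $x$ are consumed at each node of $T^*_x$, and in choosing a convention for $\Phi$ on the positive-measure set where $T^*_x$ turns out to be finite (for which one may instead pass to an auxiliary-randomness map $3^\omega \times 2^\omega \to 4^\omega$ using fresh bits to continue the encoding after extinction); once $\lambda_\Phi = \mu_{GW}$ is secured the two directions are immediate from Lemma~\ref{lemma:DiamKjosGWtree}.
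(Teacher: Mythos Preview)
Your proposal is correct and follows essentially the same route as the paper: define the tree $T^*_x$ of strings whose edge-labels all lie in $\{0,2\}$, verify that the induced pushforward is $\mu_{GW}$ by computing the four branching probabilities $2/9,2/9,4/9,1/9$, identify $[T^*_x]$ with $Z_{F_x}$ via totality of random $F_x$, and conclude both directions from preservation of randomness, no randomness ex nihilo, and Lemma~\ref{lemma:DiamKjosGWtree}. You are in fact slightly more careful than the paper in flagging the finite-tree case and the need for a convention (or auxiliary randomness) to make the map into $4^\omega$ genuinely total with pushforward exactly $\mu_{GW}$; the paper glosses over this point.
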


\begin{proof}
  We define a computable map $\Psi\colon \mathcal{F}(\cs) \to \Tree$
  that pushes forward $\lambda_{\mathcal{F}}$ to $\mu_{GW}$ such that
  the set of paths through $\Psi(F)\cap \dom(F)$ is exactly $Z_{F}$.
  Given our representation of functions as members of $3^{\omega}$ and
  binary trees as members of $4^{\omega}$, we are really defining a
  computable map $\widehat{\Psi}\colon 3^{\omega} \to 4^{\omega}$ that
  pushes forward $\lambda$ to $\mu_{GW}$.

  Given $F\in \mathcal{F}(\cs)$, which we think of as a
  $\{0,1,2\}$-labeling of the edges of the full binary tree, we build
  the desired tree by declaring that $\sigma \in \Psi(F)$ if and only
  if the labels by $F$ of the edges of $\sigma$ consists only of $0$'s
  and $2$'s.  More formally, as in the paragraph preceding
  Definition~\ref{def:RCF-defn}, $F$ comes with a labeling function
  $\ell_{F} \colon 2^{<\omega} \setminus \{\epsilon\} \to 3$
  defined by $\ell_{F}(\sigma_{i}) = j$ if and only if $x(i)=j$ where
  $x$ is the given code for $F$. So $\sigma\in\Psi(F)$ if and
  only if $\ell_{F}(\sigma\uh k)\in\{0,2\}^{<\omega}$ for every
  $0<k\leq |\sigma|$. Clearly this map is computable.


  Now we show that the map $\Psi$ pushes $\lambda_{\mathcal{F}}$
  forward to $\mu_{GW}$.  Suppose $\sigma\in\Psi(F)$, which, as
  stated above, means that $\ell_F(\sigma\uh k)\in\{0,2\}^{<\omega}$
  for every $0<k\leq|\sigma|$.  Then
  \[
  \sigma 0\in \Psi(F)\;\;\&\;\; \sigma1\notin
  \Psi(F)\;\;\Leftrightarrow\;\; \ell_F(\sigma0)\in\{0,2\}\;\;\&\;\;\ell_F(\sigma1)=1.
  \]
  The right-hand side of the equivalence occurs with probability
  $(2/3)(1/3)=2/9$.  Similarly,
  \[
  \sigma0\notin \Psi(F)\;\;\&\;\; \sigma1\in
  \Psi(F)\;\;\Leftrightarrow\;\;\ell_F(\sigma0)=1
  \;\;\&\;\;\ell_F(\sigma1)\in\{0,2\},
  \]
  where this latter event also occurs with probability $2/9$.  Next,
  \[
  \sigma0\in \Psi(F)\;\;\&\;\; \sigma1\in
  \Psi(F)\;\;\Leftrightarrow\;\; \ell_F(\sigma0)\in\{0,2\}\;\;\&\;\;\ell_F(\sigma1) \in \{0,2\},
  \]
  with the latter event occurring with probability $(2/3)(2/3)=4/9$.
  Lastly,
  \[
  \sigma0\notin \Psi(F)\;\;\&\;\; \sigma1\notin
  \Psi(F)\;\;\Leftrightarrow\;\; \ell_F(\sigma
  0)=\ell_F(\sigma1)=1,
  \]
  where the event on the right-hand side occurs with probability
  $(1/3)(1/3)=1/9$.  Now by construction, it follows immediately that
  any path through the tree $\Psi(F)$ is a sequence $X$ such that
  either $F(X)=0^{\omega}$ (in the case that $\ell_F(X\uh n)=0$ for
  infinitely many $n$) or $F(X)\diverge$ (in the case that $\ell_F(X\uh
  n)=0$ for only finitely many $n$).

  By preservation of randomness and no randomness ex nihilo, a tree is
  GW-random if and only if it is the image of some random continuous
  function $F$. The conclusion then follows by
  Lemma~\ref{lemma:DiamKjosGWtree}.
\end{proof}


One consequence of Theorem \ref{thm-rcfs-image-of-comp} and
Theorem~\ref{thm:zeros-rcf}(\ref{it:zeros-rcfii}), not noted in
\cite{Barmpalias:2008aa}, is that the composition of two random
continuous functions need not be random.

\begin{corollary}
  For every random $F\in\F(\cs)$, there is some random $G\in\F(\cs)$
  such that $G\circ F$ is not random.
\end{corollary}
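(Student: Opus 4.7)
The plan is to exhibit, for a given random $F \in \F(\cs)$, a random $G \in \F(\cs)$ such that $(G \circ F)(0^\omega) = 0^\omega$. Since $0^\omega$ is computable, the contrapositive of Theorem~\ref{thm-rcfs-image-of-comp} will then force $G \circ F$ to be non-random (regardless of whether we interpret $G \circ F$ directly as a continuous function or through any code).

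First, I would observe that $r \eqdef F(0^\omega)$ is well-defined (since $F$ is total by Theorem~\ref{thm-rcfs-total}) and that $r \in \MLR_\lambda$ by Theorem~\ref{thm-rcfs-image-of-comp} applied to the computable input $0^\omega$. Next, by Theorem~\ref{thm-members-RCS} there is a random closed set $C \in \C(\cs)$ with $r \in C$. Then Theorem~\ref{thm:zeros-rcf}(\ref{it:zeros-rcfii}) yields a random $G \in \F(\cs)$ whose zero set $Z_G$ equals $C$. In particular $r \in Z_G$, i.e., $G(r) = 0^\omega$, so
\[
(G \circ F)(0^\omega) = G(F(0^\omega)) = G(r) = 0^\omega.
\]

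Finally, I would argue by contradiction: suppose $G \circ F$ were random. Since $0^\omega$ is computable, Theorem~\ref{thm-rcfs-image-of-comp} would give $(G \circ F)(0^\omega) \in \MLR_\lambda$, contradicting the fact that $0^\omega \notin \MLR_\lambda$.

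There is essentially no main obstacle: the two results cited (the existence of a random closed set through any $\lambda$-random real, and the realization of any random closed set as $Z_G$ for some random $G$) already do all the work. The only point that needs a line of explanation is that ``$G \circ F$ is not random'' is meant in the sense of Definition~\ref{def:RCF-defn}, and the application of Theorem~\ref{thm-rcfs-image-of-comp} legitimately rules that out since its conclusion depends only on the function $G \circ F \in \F(\cs)$ and the computable point $0^\omega$, not on how $G \circ F$ happens to be coded.
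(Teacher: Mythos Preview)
Your proof is correct and follows essentially the same approach as the paper: produce $r=F(0^\omega)\in\MLR_\lambda$, place $r$ in a random closed set $C$, realize $C$ as $Z_G$ for a random $G$, and conclude $(G\circ F)(0^\omega)=0^\omega$ contradicts Theorem~\ref{thm-rcfs-image-of-comp}. Your added remarks on totality and the coding of $G\circ F$ are welcome clarifications but do not change the argument.
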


\begin{proof}
  By Theorem \ref{thm-rcfs-image-of-comp}, there is some $R\in\MLR$
  such that $F(0^\omega)=R$. By Theorem \ref{thm-members-RCS}, there
  is some random $C \in \C(\cs)$ containing $R$. By
  Theorem~\ref{thm:zeros-rcf}(\ref{it:zeros-rcfii}), there is a $G
  \in \F(\cs)$ such that $G^{-1}(\{0^{\omega}\}) = C$.
  It follows that $G(F(0^\omega))=0^\omega$, which implies with
  Theorem \ref{thm-rcfs-image-of-comp} that $G\circ F$ is not random.
\end{proof}

  
Another consequence of Theorem \ref{thm:zeros-rcf} is that we
can answer another open question from \cite{Barmpalias:2008aa} involving
random pseudo-distance functions.  Given a closed set $C\in\C(\cs)$, a function
$\delta:\cs\rightarrow\cs$ is a \emph{pseudo-distance function for $C$} if $C$ is
the set of zeros of $\delta$.  In \cite{Barmpalias:2008aa} it was shown that
if $\delta$ is a random pseudo-distance function for some $C\in\C(\cs)$, then
$C$ is a random closed set, but the converse was left open.  By Theorem  
\ref{thm:zeros-rcf}, the converse immediately follows.

\begin{corollary}
Let $C\in\C(\cs)$.  $C$ has a random pseudo-distance function if and only
if $C$ is a random closed set.
\end{corollary}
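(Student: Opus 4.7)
The statement unfolds into two implications, and by definition a pseudo-distance function for $C$ is simply a function $\delta\in\F(\cs)$ with $Z_\delta = C$, so a \emph{random} pseudo-distance function for $C$ is precisely a random continuous function whose zero set equals $C$. Under this reformulation, the corollary is saying that $C$ is a random closed set if and only if there exists a random $F\in\F(\cs)$ with $Z_F=C$, and both directions are immediate consequences of results already in hand.

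For the forward direction, suppose $C$ has a random pseudo-distance function $\delta$. Then $\delta$ is a random continuous function and $Z_\delta = C$ is non-empty (since $C\in\C(\cs)$). By Theorem~\ref{thm:zeros-rcf}(\ref{it:zeros-rcfi}), $Z_\delta$ is a random closed set, so $C$ is a random closed set. (This direction is the one already established in \cite{Barmpalias:2008aa}, so one could simply cite it.)

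For the reverse direction, suppose $C\in\C(\cs)$ is a random closed set. Apply Theorem~\ref{thm:zeros-rcf}(\ref{it:zeros-rcfii}) to produce a random $F\in\F(\cs)$ such that $C = Z_F$. By the definition of a pseudo-distance function, $F$ itself is a pseudo-distance function for $C$, and it is random by construction, so $C$ admits a random pseudo-distance function.

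There is no real obstacle here; the corollary is essentially a restatement of Theorem~\ref{thm:zeros-rcf} in the pseudo-distance vocabulary of \cite{Barmpalias:2008aa}. The only point worth flagging in the write-up is the identification of ``random pseudo-distance function for $C$'' with ``random $F\in\F(\cs)$ satisfying $Z_F=C$,'' which makes both implications transparent given the preceding theorem.
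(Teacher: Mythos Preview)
Your proof is correct and matches the paper's approach exactly: the paper simply cites \cite{Barmpalias:2008aa} for the forward direction and states that the converse follows immediately from Theorem~\ref{thm:zeros-rcf}, which is precisely what you do (with the added observation that the forward direction is also a consequence of Theorem~\ref{thm:zeros-rcf}(\ref{it:zeros-rcfi})).
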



\section{The support of a random measure}\label{sec-support}

In the previous section, we established a correspondence between
random closed sets and and random continuous functions: a closed set
$C$ is random if and only if it is the set of zeros of some random
continuous function.  In this section, we establish similar correspondences
between random closed sets and random measures.

Since the support of a measure $\mu$, i.e., the
set $\Supp(\mu)=\{x\in\cs:\forall n\;\mu(x\uh n)>0\}$ is a closed set, one 
might hope to establish such a correspondence by considering the supports 
of random measures.  However, it is not hard to see that for each random 
measure $\mu$, $\Supp(\mu)=\cs$.  

If we consider a different computable measure on
$\P(\cs)$ than the measure $P$ defined above in Section \ref{subsec-random-measures},
then such a correspondence can be given.  In the first place, we want a 
measure $Q$ on $\P(\cs)$
with the property that no $Q$-random measure has full support.  In
fact, we can choose a measure $Q$ such that each $Q$-random measure is
supported on a random closed set.


\begin{theorem}
  There is a computable measure $Q$ on $\mathscr{P}(\cs)$ such that
  \begin{itemize}
    \item[(i)] every $Q$-random measure is supported on a random
    closed set, and
    \item[(ii)] for every random closed set $C\subseteq\cs$,
    there is a $Q$-random measure $\mu$ such that $\Supp(\mu)=C$.
  \end{itemize}
\end{theorem}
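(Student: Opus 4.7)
The plan is to imitate the strategy used in the proof of Theorem~\ref{thm:zeros-rcf}: construct a computable map $\Phi$ from $3^\omega$ (viewed as the coding space for trees without dead ends) to $\mathscr{P}(\cs)$, let $Q$ be the pushforward $\lambda_\Phi$, and apply preservation of randomness together with no randomness ex nihilo in their natural extension to computable maps between effective Polish probability spaces. Given $y \in 3^\omega$ coding the tree $T_y$, I would define $\mu_y = \Phi(y)$ to be the canonical measure supported uniformly on $[T_y]$: put $\mu_y(\varepsilon) = 1$ and, inductively, for each $\sigma \in T_y$, set $\mu_y(\sigma 0) = \mu_y(\sigma 1) = \tfrac{1}{2}\mu_y(\sigma)$ when both $\sigma 0$ and $\sigma 1$ lie in $T_y$, and otherwise give all of $\mu_y(\sigma)$ to the unique child of $\sigma$ in $T_y$. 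For $\sigma \notin T_y$, set $\mu_y(\sigma) = 0$. Since $T_y$ has no dead ends, $\mu_y(\sigma) = 2^{-b(\sigma)} > 0$ for every $\sigma \in T_y$, where $b(\sigma)$ counts the strict ancestors of $\sigma$ in $T_y$ having both children in $T_y$. It follows directly that $\Supp(\mu_y) = [T_y]$.

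Next I would verify that $\Phi$ is computable. To compute $\mu_y(\sigma)$ it suffices to determine the status of $\sigma$ and its $\leq|\sigma|$ proper ancestors in $T_y$; each such status is decided by a specific bit of $y$ (the bit indexing the relevant extendible node), so only a finite initial segment of $y$ is needed. All conditional probabilities $\mu_y(\sigma j \mid \sigma)$ lie in $\{0, \tfrac{1}{2}, 1\}$, so by choosing canonical dyadic binary expansions for each of these values we can lift $\Phi$ to a computable map $3^\omega \to \cs$ compatible with the coding $\Psi$ of $\mathscr{P}(\cs)$. Hence the pushforward $Q = \lambda_\Phi$ is a computable measure on $\mathscr{P}(\cs)$ by the pushforward fact stated in Section~\ref{sec-background}.

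With $\Phi$ and $Q$ in place, both halves of the theorem follow immediately from the two principles. For (ii), if $C$ is a random closed set, then $C = [T_y]$ for some $y \in \MLR_\lambda$, and preservation of randomness gives $\Phi(y) \in \MLR_Q$, with $\Supp(\Phi(y)) = [T_y] = C$ by construction. For (i), if $\mu \in \MLR_Q$ then no randomness ex nihilo produces some $y \in \MLR_\lambda$ with $\Phi(y) = \mu$, whence $\Supp(\mu) = [T_y]$ is a random closed set by Definition~\ref{def:closed-set-defn}. I expect the only real obstacle to be bookkeeping: verifying that the canonical choice of dyadic code makes $\Phi$ total, single-valued, and computable into the coding of $\mathscr{P}(\cs)$, since the image of $\Phi$ sits in a topologically small (but measure-theoretically legitimate) subset of the coding space. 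Once this is handled, preservation of randomness and no randomness ex nihilo do the rest.
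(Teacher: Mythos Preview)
Your proposal is correct and follows essentially the same approach as the paper: define $Q$ as the pushforward of Lebesgue measure on $3^\omega$ under the map sending a tree-code $y$ to the uniform (``fair-coin on branching nodes'') measure supported on $[T_y]$, then invoke preservation of randomness and no randomness ex nihilo. Your presentation is in fact a bit cleaner---you define $\mu_y$ directly from the tree rather than via a column-by-column encoding, and you correctly attribute (i) to no randomness ex nihilo and (ii) to preservation---but the underlying construction and the two key principles are exactly those used in the paper.
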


\begin{proof}

  We will define the measure $Q$ so that each $Q$-random measure is
  obtained by restricting the Lebesgue measure to a random closed set.
  That is, each $Q$-random measure will be uniform on all of the
  branching nodes of its support.

  We define $Q$ in terms of an almost total functional
  $\Phi:\ccs\rightarrow\cs$.  On input $x\in\ccs$, $\Phi$ will treat
  $x$ as the code for a closed set and will output the sequence
  $y=\oplus_{i\in\omega}y_i$ defined as follows.  For each
  $i\in\omega$, we set
  \[
  y_i= \left\{
    \begin{array}{lll}
      1^\infty & \mbox{if } x(i)=0 \\
      0^\infty & \mbox{if } x(i)=1 \\
      10^\infty & \mbox{if } x(i)=2 
    \end{array}.
  \right.
  \]
If we think of the columns of $y$ as encoding the conditional probabilities of a measure $\mu_y$, then
if $(\sigma_i)_{i\in\omega}$ is the standard enumeration of $\str$, these conditional probabilities are given
by
  \[
  p_{\sigma_i}= \left\{
    \begin{array}{lll}
      1 & \mbox{if } x(i)=0 \\
      0 & \mbox{if } x(i)=1 \\
      1/2 & \mbox{if } x(i)=2 
    \end{array}.
  \right.
  \]
  That is, $\Phi(x)=y$, where $y$ represents the unique measure  $\mu_y$ such that
  $\mu_y(\sigma0\mid\sigma)=p_\sigma$ for each $\sigma\in\str$.  Let $Q$
  be the measure on $\mathscr{P}(\cs)$ induced by the composition of $\Phi$ and the representation
  map $\Psi:\cs\rightarrow\P(\cs)$ defined in Section \ref{subsec-random-measures}.

  We now verify (i) by showing that $\Phi$ maps each $x\in\MLR$ to a
  $Q$-random measure supported on a random closed set.  Let
  $x\in\MLR$ and set $\Phi(x)=y$.  By preservation of randomness, $\Psi(\Phi(x))=\mu_y$ is $Q$-random.
 
  Next, since $x\in\MLR$, $[T_x]$ is a random closed set.
  We claim that $\Supp(\mu_y)=[T_x]$.  Suppose that
  $\sigma\in\str$ is the $(n+1)$-st extendible node of
  $T_x$.  Then one of the following holds:
  \begin{itemize}
    \item[(a)] $\sigma0\in T_x$ and $\sigma1\notin
    T_x$;
    \item[(b)] $\sigma0\notin T_x$ and $\sigma1\in
    T_x$; or
    \item[(c)] $\sigma0\in T_x$ and $\sigma1\in
    T_x$.
  \end{itemize}
  Moreover, we have
  \begin{itemize}
    \item Condition (a) holds iff $x(n)=0$ iff $\mu_y(\sigma 0\mid\sigma)=1$ and $\mu_y(\sigma
    1\mid\sigma)=0$.
    \item Condition (b) holds iff $x(n)=1$ iff $\mu_y(\sigma 0\mid\sigma)=0$ and $\mu_y(\sigma
    1\mid\sigma)=1$.
    \item Condition (c) holds iff $x(n)=2$ iff $\mu_y(\sigma 0\mid\sigma)=\mu_y(\sigma
    1\mid\sigma)=1/2$.
  \end{itemize}
  One can readily verify that $\mu_y(\sigma^\frown i\mid \sigma)>0$ if
  and only if $\sigma^\frown i\in T_x$.  Thus
  \begin{equation*}
    \begin{split}
      Z\in \Supp(\mu_y) &\Leftrightarrow \mu_y(Z\uh n)>0\;\text{for every}\;n \\
      & \Leftrightarrow \mu_y(Z\uh (n+1)\mid Z\uh n)>0\;\text{for every}\;n \\
      &\Leftrightarrow Z\uh (n+1)\in T_x\;\text{for every}\;n \\
      &\Leftrightarrow Z\in[T_x].
    \end{split}
  \end{equation*}
  We have thus established that $\mu_y$ is supported on a random closed
  set.

  To show (ii), let $C\subseteq\cs$ be a random closed set.  By definition there 
  is some Martin-L\"of random $z\in\ccs$ such that $C=[T_z]$.  Hence $\Psi(\Phi(z))$ is a
  $Q$-random measure $\nu$.  By the definition of $\Phi$, $\nu$ has
  support $[T_z]=C$, which establishes the claim.
\end{proof}

Instead of changing the measure on $\P(\cs)$ we can also establish a correspondence
between random closed sets and random measures by considering not the support of 
a random measure but what we refer to as its 1/3-support.

\begin{definition}
  Let $\mu\in\P(\cs)$ and set
  \[
  T_\mu=\{\sigma:(\forall
  i<|\sigma|)\;[\;\mu\bigl(\sigma\uh(i+i)\mid\sigma\uh
  i\bigr)>1/3\;]\}\cup\{\epsilon\}.
  \]
 Then the 1/3-\emph{support} of the measure $\mu$ is the closed set $[T_\mu]$.
\end{definition}

\begin{theorem}
  A closed set $C\in\C(\cs)$ is random if and only it is the
  1/3-support of some random measure $\mu\in\P(\cs)$.
\end{theorem}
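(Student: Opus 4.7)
The plan is to adapt the strategy of Theorem~\ref{thm:zeros-rcf}: define a single computable partial map $F\colon\subseteq\cs\to 3^\omega$, with $\lambda(\dom(F))=1$, that pushes $\lambda$ on $\cs$ forward to $\lambda$ on $3^\omega$ (the latter being the measure used to define random closed sets), and that satisfies $T_{F(x)} = T_{\mu_x}$ for every $x\in\dom(F)$, where $T_{F(x)}$ is interpreted via the coding of Section~\ref{sec-aro}. Once $F$ is in hand, Theorem~\ref{thm:PoR-and-NReN} immediately yields both implications: if $\mu = \mu_x$ is $P$-random then $x\in\MLR_\lambda$, so $F(x)\in\MLR_\lambda$ and the $1/3$-support $[T_{\mu_x}] = [T_{F(x)}]$ is random; conversely, if $C=[T_y]$ is random then by no randomness ex nihilo there is $x\in\MLR_\lambda$ with $F(x)=y$, making $\mu_x$ a $P$-random measure whose $1/3$-support is $C$.

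To define $F(x)$, I enumerate the extendible nodes of $T_{\mu_x}$ in length-lex order. For the $(n+1)$-st extendible node $\sigma_m$ encountered, I look at the conditional probability $p_m = \mu_x(\sigma_m 0\mid\sigma_m)$, which by construction of $\mu_x$ equals the real value of the $m$-th column of $x$. I then set $F(x)(n) = 0$ if $p_m \geq 2/3$, $F(x)(n) = 1$ if $p_m \leq 1/3$, and $F(x)(n) = 2$ if $1/3 < p_m < 2/3$. Since $1/3$ and $2/3$ are not dyadic rationals, the set of $x$ having some column whose real value is exactly $1/3$ or $2/3$ is $\lambda$-null; off this set the three comparisons are uniformly decidable from the bits of $x$, so $F$ is computable on a set of measure one.

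The case analysis aligns with the coding of closed sets because at a node $\sigma_m$ we have $\sigma_m 0\in T_{\mu_x}$ iff $p_m > 1/3$ and $\sigma_m 1\in T_{\mu_x}$ iff $p_m < 2/3$; hence, outside the null set above, the values $0,1,2$ correspond respectively to retaining only the left child, only the right child, or both children of $\sigma_m$ in the $1/3$-support tree, exactly as required by the closed-set coding. Because the columns of a $\lambda$-random $x$ are jointly independent and each codes a uniform real in $[0,1]$, each of the three alternatives has probability $1/3$ regardless of the prior history of decisions; hence $F$ pushes $\lambda$ forward to $\lambda$ on $3^\omega$.

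The main subtlety is bookkeeping: the column index $m$ consulted at step $n$ depends on the earlier output of $F$, but by independence of the columns this affects only \emph{which} column is read, not its distribution, so the uniform pushforward survives. A secondary technical point is the decidability of $p_m > 1/3$ and $p_m < 2/3$, which holds on the $\lambda$-measure-one set of $x$ avoiding the exceptional values $\{1/3, 2/3\}$ in every column; outside this set, one simply declares $F$ undefined.
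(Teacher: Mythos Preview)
Your proposal is correct and follows essentially the same approach as the paper: both construct an almost-total computable map $\cs\to 3^\omega$ sending $x$ to the ternary code of the $1/3$-support tree $T_{\mu_x}$, verify that it pushes $\lambda$ forward to $\lambda$ (each output trit has conditional probability $1/3$ by independence and uniformity of the columns of $x$), and then invoke preservation of randomness together with no randomness ex nihilo. Your explicit discussion of the column-index bookkeeping is, if anything, slightly more careful than the paper's brief treatment of the same point.
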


\begin{proof}
  We define an almost-total, computable, and
  Lebesgue-measure-preserving map $\Phi:2^\omega\rightarrow 3^\omega$
  that induces a map $\tilde{\Phi}:\mathscr{P}(\cs)\rightarrow\C(\cs)$
  such that $\tilde{\Phi}(\mu)=[T_\mu]$.  Given $x=\oplus x_i\in\cs$
  such that $\mu({\sigma_i}^\frown0 \mid \sigma_{i})=x_i$ for each
  $i$, then for $\sigma\in T_\mu$ (which must exist since $\epsilon\in
  T_\mu$),
  \begin{itemize}
    \item if $\mu(\sigma 0)\in[0,1/3)$, then $\sigma1\in T_\mu$ and
    $\sigma0\notin T_\mu$;
    \item if $\mu(\sigma 0)\in(2/3,1]$, then $\sigma0\in T_\mu$ and
    $\sigma1\notin T_\mu$;
    \item if $\mu(\sigma 0)\in(1/3,2/3)$, then $\sigma0\in T_\mu$ and
    $\sigma1\in T_\mu$; and
    \item if $\mu(\sigma0)=1/3$ or $\mu(\sigma0)=2/3$, then $\Phi(x)$
    is undefined.
  \end{itemize}
  Clearly $\Phi$ is defined on a set of measure one since it is
  defined on all sequences $x$ such that $x_i\neq 1/3$ and $x_i\neq
  2/3$ for each $i$.  Observe that each $\sigma\in T_\mu$ extends to an infinite
  path in $[T_\mu]$.  Thus, if $\sigma$ is the $(n+1)$-st extendible
  node in $T_\mu$, then the each of the events
  \begin{itemize}
    \item $\sigma0\in T_\mu$ and $\sigma1\notin T_\mu$,
    \item $\sigma0\notin T_\mu$ and $\sigma1\in T_\mu$, and
    \item $\sigma0\in T_\mu$ and $\sigma1\in T_\mu$,
  \end{itemize}
  occurs with probability 1/3, since each event corresponds to whether
  $\mu(\sigma 0)\in[0,1/3)$, $\mu(\sigma 0)\in(2/3,1]$, or $\mu(\sigma
  0)\in(1/3,2/3)$, respectively.  It thus follows that the pushforward
  measure induced by $\lambda$ and $\Phi$ is the Lebesgue measure on
  $3^\omega$.  By preservation of randomness, each random measure
  $\mu$ is mapped to a random closed set, and by no randomness ex
  nihilo, each random closed set is the image of a random measure
  under $\Phi$.  This establishes the theorem.
\end{proof}

\section{The range of a random continuous function}\label{sec-range}

%
%
%
%

In \cite{Barmpalias:2008aa}, it was shown that for each $y\in\cs$
\[
\lambda(\{x\in\cs:y\in\ran(F_x)\})=3/4.
\]
from which it follows that every $y\in\cs$ is in the range of some random
$F\in\F(\cs)$.  In this section, we prove that $\lambda(\ran(F))\in(0,1)$ for 
every random function $F$.  First we will prove that $\lambda(\ran(F))>0$
for each random function, from which it follows that no random function
is injective and that the range of a random function is never a random closed
set.  These improve two results of \cite{Barmpalias:2008aa} according to which (i)
not every random function is injective and (ii) the range of a random function is not
necessarily a random closed set.  Our proof requires us to prove some auxiliary 
facts about the measure induced by a random function.

To prove that $\lambda(\ran(F))<1$ for every $F\in\F(\cs)$, we will show that
no random function is surjective, from which the result immediately follows.  Our
result on surjectivity also improves a result of \cite{Barmpalias:2008aa} according
to which not every random function is surjective.

We begin by proving the following, which is similar to a result in \cite{Culver:2014aa} 
for random measures.

\begin{lemma}
  Let $\lambda_{\F}$ be the measure on $\F(\cs)$ induced by the correspondence between
  $\F(\cs)$ and $3^\omega$. Then the  measure $P_{\F}$ on $\mathscr{P}(\cs)$ induced by the map $F\mapsto
  \lambda \circ F^{-1}$ has barycenter $\lambda$; i.e.\
  \begin{displaymath}
    \lambda(\sigma) = \int_{\mathscr{P}(\cs)} \mu(\sigma)\, dP_{\F}(\mu)
  \end{displaymath}
  for each $\sigma \in \str$. \label{lem:RCF-comp-leb-barycent}
\end{lemma}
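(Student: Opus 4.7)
The plan is to evaluate both sides by unwinding the definition of $P_\F$, applying Fubini's theorem to interchange integration over $\F(\cs)$ with the implicit Lebesgue integral inside $\mu(\sigma)$, and then computing the resulting pointwise distribution.

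First I would rewrite the right-hand side using the definition of the pushforward: since $P_\F$ is the distribution of $\lambda \circ F^{-1}$ for $F$ drawn from $\lambda_\F$,
\[
\int_{\mathscr{P}(\cs)} \mu(\sigma)\, dP_{\F}(\mu) = \int_{\F(\cs)} \lambda(F^{-1}(\llb\sigma\rrb))\, d\lambda_\F(F).
\]
Expressing the inner Lebesgue measure as $\int_\cs \mathbf{1}_{\llb\sigma\rrb}(F(y))\, d\lambda(y)$ and invoking Fubini's theorem on the nonnegative jointly measurable integrand $(F, y) \mapsto \mathbf{1}_{\llb\sigma\rrb}(F(y))$, this becomes
\[
\int_\cs \left(\int_{\F(\cs)} \mathbf{1}_{\llb\sigma\rrb}(F(y))\, d\lambda_\F(F)\right) d\lambda(y).
\]
It therefore suffices to show that for every fixed $y \in \cs$ the inner integral equals $\lambda(\sigma)$, i.e., that the pushforward of $\lambda_\F$ under the evaluation map $F \mapsto F(y)$ is Lebesgue measure on $\cs$.

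For this, fix $y \in \cs$ and recall the coding from the paragraph preceding Definition \ref{def:RCF-defn}: computing $F_x(y)$ reads off the labels $\ell_x(y\uh 1), \ell_x(y\uh 2), \dotsc$ and discards every $2$. Because $\ell_x(\sigma_i) = x_{i-1}$ and the initial segments $y\uh 1, y\uh 2, \dotsc$ occupy pairwise distinct positions in the length-lexicographic enumeration of $\str$, when $x$ is drawn from $\lambda$ on $\ccs$ these labels are i.i.d.\ uniform on $\{0,1,2\}$. A Borel--Cantelli argument yields that $\lambda$-almost surely infinitely many labels are non-$2$, so $F_x(y)$ is defined for $\lambda_\F$-almost every $F$, and conditioning on each non-$2$ label gives a fair coin flip. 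The thinned sequence is thus distributed as $\lambda$ on $\cs$, as required. Integrating the constant $\lambda(\sigma)$ over $y \in \cs$ against $\lambda$ produces $\lambda(\sigma)$, completing the proof.

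The only mildly delicate points are the measurability needed to apply Fubini (routine once one identifies $\F(\cs)$ with $\ccs$ via the given coding, since $(x,y) \mapsto F_x(y)$ is computable where defined) and the identification of the thinned label sequence with a $\lambda$-distributed real; the latter is where I expect the bulk of the honest verification to lie, though it is essentially a standard fact about Bernoulli thinning.
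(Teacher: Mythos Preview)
Your proof is correct, but it follows a genuinely different route from the paper's. The paper also reduces to showing $2^{-|\sigma|}=\int_{\F(\cs)}\lambda(F^{-1}\llb\sigma\rrb)\,d\lambda_\F$, but then proceeds by induction on $|\sigma|$: taking $\sigma=0^{n+1}$, it conditions on the label of the node $0$ (the three cases $0$, $1$, $2$) to derive a linear recursion for $s_{n+1}:=\int_{\F(\cs)}\lambda(\llb0\rrb\cap F^{-1}\llb0^{n+1}\rrb)\,d\lambda_\F$ in terms of itself and the inductive value $2^{-n}$, and solves it.

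Your argument instead applies Fubini to swap the $\F(\cs)$- and $\cs$-integrals, reducing the problem to the pointwise claim that for each fixed $y$ the evaluation map $F\mapsto F(y)$ pushes $\lambda_\F$ forward to $\lambda$; you then read this off from the fact that the labels along the path $y$ use pairwise distinct coordinates of $x\in\ccs$ and are therefore i.i.d.\ uniform on $\{0,1,2\}$, so discarding the $2$'s yields a fair-coin sequence. This is cleaner and more probabilistic than the paper's recursion, and it isolates a slightly stronger fact (the distribution of $F(y)$ for each fixed $y$) that the paper only obtains implicitly. The paper's approach, on the other hand, is entirely self-contained and avoids invoking Fubini or the thinning fact. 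Both are short; yours is the more conceptual of the two.
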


\begin{proof}
  By change of variables, it suffices to show that
  \begin{equation}
    2^{-|\sigma|} = \int_{\F(\cs)} \lambda(F^{-1}\llb\sigma\rrb)\,
    d\lambda_{\F} \label{eq:barycenter-varchanged}
  \end{equation}
  for each $\sigma \in \str$. Without loss of generality, we assume
  $\sigma = 0^{n}$. We proceed then by induction on $n$.

  Equation~\eqref{eq:barycenter-varchanged} holds when
  $\sigma=\epsilon$ since each random $F$ is total by Theorem \ref{thm-rcfs-total}.

  Now supposing that equation~\eqref{eq:barycenter-varchanged} holds for
  $0^{n}$, we show it also holds for $0^{n+1}$. Suppose then that
  $\int_{\F(\cs)} \lambda(F^{-1}\llb0^{n}\rrb)\, d\lambda_{\F} = 2^{-n}$. To
  compute $\int_{\F(\cs)} \lambda(F^{-1}\llb0^{n+1}\rrb)\, d\lambda_{\F}$,
  we note that by symmetry $\int_{\F(\cs)} \lambda(F^{-1}\llb0^{n+1}\rrb)\,
  d\lambda_{\F} = 2\cdot \int_{\F(\cs)} \lambda(\llb0\rrb\cap
  F^{-1}\llb0^{n+1}\rrb)\, d\lambda_{\F}$ and proceed to compute $s_{n+1}
  \eqdef \int_{\F(\cs)} \lambda(\llb0\rrb\cap F^{-1}\llb0^{n+1}\rrb)\,
  d\lambda_{\F}$.

  Recall that any $F\in \F(\cs)$ can be viewed as a labeling by $0$'s,
  $1$'s, and $2$'s of the nodes of full binary branching tree (where
  the root node is unlabeled). We compute $\int_{\F(\cs)}
  \lambda(\llb0\rrb\cap F^{-1}\llb0^{n+1}\rrb)\, d\lambda_{\F}$ by considering the
  three equiprobable cases for the label of the node $0$ for an arbitrary $F\in\F(\cs)$. 
  The point is that the label $0$ contributes to producing an output beginning with 
  $0^{n+1}$, the label $1$ rules out the possibility of producing an output beginning 
  with  $0^{n+1}$, and the label $2$ neither contributes to nor rules out the possibility 
  of producing an output beginning with $0^{n+1}$.\\

  \begin{enumerate}[\text{Case} 1:]
    \item If the node $0$ is labeled with a $0$, then the measure of all
    sequences extending the node $0$ that (after removing $2$'s) yield 
    an output extending $0^{n+1}$ is equal to the measure of all sequences
    that yield an output extending $0^n$ times 1/2 (the measure determined
    by the initial label 0), i.e., $1/2\cdot 2^{-n}$.\\

    \item If the node $0$ is labeled with a $1$, then the measure of all
    sequences extending the node $0$ that (after removing $2$'s) yield 
    an output extending $0^{n+1}$ is equal to 0.\\
    
    \item If the node $0$ is labeled with a $2$, then the measure of all
    sequences extending the node $0$ that (after removing $2$'s) yield 
    an output extending $0^{n+1}$ is equal to the measure of all sequences
    that yield an output extending $0^{n+1}$ times 1/2 (the measure determined
    by the initial label 2), i.e., $1/2\cdot s_{n+1}$.\\
   
  \end{enumerate}
  
%
%

  Putting this all together gives
  \begin{displaymath}
    s_{n+1} = \frac{1}{3} \cdot \frac{1}{2}\cdot 2^{-n} +
    \frac{1}{3}\cdot 0 + \frac{1}{3} \cdot \frac{1}{2} \cdot 2 s_{n+1}
  \end{displaymath}
  which yields $s_{n+1}=2^{-n}/4$, as desired.

\end{proof}

\begin{lemma}[Hoyrup \cite{Hoyrup:2013aa}, relativized]
  Let $Q$ be a computable measure on $\mathscr{P}(2^{\omega})$ with
  barycenter $\mu$. Then for any $z\in \cs$
  \begin{displaymath}
    \MLR^{z}_{\mu} = \bigcup_{\nu \in \MLR^{z}_{Q}} \MLR^{z}_{\nu}.
  \end{displaymath}
  \label{lem:barycent-rand-decom}
\end{lemma}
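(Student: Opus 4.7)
The plan is to prove the two inclusions separately.

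For $\bigcup_{\nu\in \MLR^z_Q} \MLR^z_\nu \subseteq \MLR^z_\mu$, I would argue by contraposition. Suppose $\nu \in \MLR^z_Q$ and $x \notin \MLR^z_\mu$, witnessed by a $\mu$-Martin-L\"of test $(U_n)_{n\in\omega}$ relative to $z$ with $x\in\bigcap_n U_n$. The function $F(\nu') \eqdef \sum_n \nu'(U_n)$ is lower semicomputable on $\mathscr{P}(\cs)$ relative to $z$, and the barycenter identity gives
\[
\int F\,dQ \;=\; \sum_n \mu(U_n) \;\leq\; \sum_n 2^{-n} \;<\; \infty.
\]
The effective integral test for $\MLR^z_Q$ then forces $F(\nu)<\infty$, so $(U_n)_{n\in\omega}$ is effectively a Solovay test for $\nu$ relative to $z$ (using a code of $\nu$ as oracle). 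Since $x$ lies in every $U_n$, the Solovay--Martin-L\"of equivalence yields $x\notin \MLR^z_\nu$, a contradiction.

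For the reverse inclusion, fix $x\in\MLR^z_\mu$ and let $R$ denote the computable probability measure on $\mathscr{P}(\cs)\times\cs$ defined on rectangles by $R(A\times B)\eqdef\int_A\nu(B)\,dQ(\nu)$, so that $R$ has marginals $Q$ and $\mu$. The projection $\pi_2\colon\mathscr{P}(\cs)\times\cs\to\cs$ is computable and pushes $R$ forward to $\mu$, so by no randomness ex nihilo (Theorem~\ref{thm:PoR-and-NReN}, relativized to $z$) there is a pair $(\nu,x)\in\MLR^z_R$ whose second coordinate is $x$. Preservation of randomness applied to $\pi_1$ then yields $\nu\in\MLR^z_Q$. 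Finally, an effective disintegration argument, exploiting the fact that the conditional distribution of the second coordinate of $R$ given the first is $\nu$ itself, yields $x\in\MLR^z_\nu$.

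The main obstacle is the effective disintegration step in the second half: one must convert any putative $\nu$-randomness test covering $x$ (uniformly in a code for $\nu$) into an $R$-randomness test covering $(\nu,x)$, and conversely. This effective Fubini-style correspondence for the non-product measure $R$ is the technical heart of Hoyrup's original argument, and the reason the lemma is quoted rather than proved ab initio.
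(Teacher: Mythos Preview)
The paper does not supply a proof of this lemma; it is stated and attributed to Hoyrup with no further argument, so there is no in-paper proof to compare against. Your outline is faithful to Hoyrup's original approach: the integral-test argument for the inclusion $\bigcup_{\nu\in\MLR^z_Q}\MLR^z_\nu\subseteq\MLR^z_\mu$ is correct as written, and for the reverse inclusion the joint measure $R$ together with no randomness ex nihilo (to lift $x$ to an $R$-random pair $(\nu,x)$) and preservation (to get $\nu\in\MLR^z_Q$) is the right setup. You also correctly flag the remaining step---passing from $(\nu,x)\in\MLR^z_R$ to $x\in\MLR^z_\nu$---as a van Lambalgen--type statement for the non-product disintegration of $R$, which is indeed where the substance of Hoyrup's argument lies and why the present paper cites rather than reproves the result.
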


\begin{theorem}\label{thm:rcf-range-nonnull}
  If $F\in\F(\cs)$ is random, then $\lambda(\ran(F))>0$.
\end{theorem}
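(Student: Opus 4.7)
The plan is to combine the barycenter property of Lemma \ref{lem:RCF-comp-leb-barycent} with the Hoyrup decomposition in Lemma \ref{lem:barycent-rand-decom}. First I would note that the map $\Xi\colon\F(\cs)\to\P(\cs)$ sending $F$ to $\nu_F:=\lambda\circ F^{-1}$ is computable on a set of full $\lambda_\F$-measure and, by the very definition of $P_\F$, pushes $\lambda_\F$ forward to $P_\F$. So for $F\in\MLR_{\lambda_\F}$, Theorem \ref{thm:PoR-and-NReN} gives $\nu_F\in\MLR_{P_\F}$. Since $P_\F$ has barycenter $\lambda$ by Lemma \ref{lem:RCF-comp-leb-barycent}, Lemma \ref{lem:barycent-rand-decom} then yields
\[
\MLR_{\nu_F}\subseteq\MLR_\lambda.
\]

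Next I would observe that $\nu_F$ is $F$-computable and supported on $\ran(F)$, which is closed since $F$ is total by Theorem \ref{thm-rcfs-total}. In particular $\MLR_{\nu_F}$ has $\nu_F$-measure one and sits inside $\Supp(\nu_F)\subseteq\ran(F)$. Combining with the Hoyrup inclusion gives $\nu_F(\MLR_\lambda\cap\ran(F))=1$, which, using $\nu_F(A)=\lambda(F^{-1}(A))$, rewrites as $\lambda(\{x:F(x)\in\MLR_\lambda\})=1$. Because this full-$\lambda$-measure set is $\Sigma^{0,F}_2$, its complement is $F$-effectively null, so every $x\in\MLR^F_\lambda$ is sent by $F$ into $\MLR_\lambda\cap\ran(F)$. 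At this stage $\ran(F)$ is known to contain plenty of $\lambda$-random sequences.

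To finish I would take $x$ random relative to $F$ and observe that $F(x)\in\MLR^F_{\nu_F}\cap\MLR_\lambda\cap\ran(F)$, where the first membership is preservation of randomness relative to $F$ (since $F$ is trivially $F$-computable and $\nu_F$ is its pushforward of $\lambda$). The main obstacle is the last move: upgrading the plain $\lambda$-randomness of $F(x)$ to $F$-$\lambda$-randomness. Once that upgrade is in hand, $\ran(F)\in\Pi^{0,F}_1$ (Remark \ref{rmk-rcfs}) contains an $F$-$\lambda$-random sequence, which forces $\lambda(\ran(F))>0$ by the relativization of Proposition \ref{prop-pi01-null}. I expect this upgrade to come from an effective absolute-continuity relation between $\nu_F$ and $\lambda$---namely, that every $F$-effectively $\lambda$-null set is $\nu_F$-null---which one should be able to extract from the Hoyrup inclusion $\MLR_{\nu_F}\subseteq\MLR_\lambda$ together with the $F$-computability of $\nu_F$. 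Pinning this down cleanly is where the bulk of the technical work lies.
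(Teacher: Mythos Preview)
Your strategy is the paper's strategy, and the obstacle you single out is exactly the step the paper dispatches in one line. The paper's proof reads: pick $x\in\MLR^F_\lambda$; by preservation relative to $F$, $F(x)\in\MLR^F_{\nu_F}$ where $\nu_F=\lambda\circ F^{-1}$; then invoke Lemmas~\ref{lem:RCF-comp-leb-barycent} and~\ref{lem:barycent-rand-decom} \emph{with $z=F$} to obtain $\MLR^F_{\nu_F}\subseteq\MLR^F_\lambda$; hence $F(x)\in\ran(F)\cap\MLR^F_\lambda$, and Remark~\ref{rmk-rcfs} together with Proposition~\ref{prop-pi01-null} gives $\lambda(\ran(F))>0$.

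The difference from your route is that you apply the Hoyrup decomposition in its unrelativized form (obtaining only $\MLR_{\nu_F}\subseteq\MLR_\lambda$) and then try to lift to the $F$-relative level afterwards. The paper works $F$-relatively from the outset in the Hoyrup step, so your detour through $\nu_F(\MLR_\lambda\cap\ran(F))=1$, the $\Sigma^{0,F}_2$ argument, and the proposed effective-absolute-continuity upgrade are all unnecessary.

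One caveat worth recording: taken literally, Lemma~\ref{lem:barycent-rand-decom} with $z=F$ yields $\MLR^F_{\nu_F}\subseteq\MLR^F_\lambda$ only provided $\nu_F\in\MLR^F_{P_\F}$, and since $\nu_F$ is $F$-computable this is not automatic. The paper does not comment on this point. So your instinct that something delicate happens at precisely this step is sound; the paper simply absorbs the issue into the invocation of the relativized Hoyrup lemma rather than handling it by a post hoc upgrade of the kind you sketch.
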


\begin{proof}
  Fix a random $F\in \F(\cs)$. We show that $\ran(F)$ always contains
  an element of $\MLR^{F}_{\lambda}$. Since $\ran(F)$ is $\Pi^{0,F}_{1}$ by Remark \ref{rmk-rcfs},
  it follows by Proposition \ref{prop-pi01-null} that $\lambda(\ran(F))>0$.
  
  By preservation of randomness relative to $F$, if $x \in
  \MLR^{F}_{\lambda}$, then $F(x) \in \MLR^{F}_{\lambda\circ F^{-1}}$.
  But by Lemmas~\ref{lem:RCF-comp-leb-barycent}
  and~\ref{lem:barycent-rand-decom}, $\MLR^{F}_{\lambda\circ F^{-1}}
  \subseteq \MLR^{F}_{\lambda}$, so $F(x) \in \MLR_{\lambda}^{F}$, as
  desired.
\end{proof}

\begin{corollary}\label{cor-rcfs-injective}
  If $F\in\F(\cs)$ is random, then $F$ is not injective.
\end{corollary}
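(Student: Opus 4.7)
My plan is to argue by contradiction. Suppose $F \in \F(\cs)$ is both random and injective. Because $F$ is continuous and total (Theorem~\ref{thm-rcfs-total}) on the compact space $\cs$, injectivity promotes $F$ to a homeomorphism from $\cs$ onto $\ran(F)$. In particular, the restrictions $F_0 := F|_{\llb 0\rrb}$ and $F_1 := F|_{\llb 1\rrb}$ have disjoint ranges, since $F(\llb 0\rrb \cap \llb 1\rrb) = F(\emptyset) = \emptyset$. This is the injectivity consequence I want to contradict.

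Next I would exploit the coding of $F$ by a $\lambda$-random $x \in 3^\omega$ labeling $\str$. The positions of $x$ that encode the subtrees below the root's two children are disjoint index sets, so by van Lambalgen's theorem (Theorem~\ref{thm-vlt}), the two sub-codes form an independently $\lambda \otimes \lambda$-random pair. Up to the bookkeeping involving the labels $\ell_x(0)$ and $\ell_x(1)$, this means $F_0$ and $F_1$ behave as independent random continuous functions on $\cs$. Applying Theorem~\ref{thm:rcf-range-nonnull} to each, both $\lambda(\ran(F_0))$ and $\lambda(\ran(F_1))$ are strictly positive.

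To close the contradiction, I would compute the expected Lebesgue measure of the overlap $\ran(F_0) \cap \ran(F_1)$ by Fubini and independence. Using the identity $\lambda(\{x : y \in \ran(F_x)\}) = 3/4$ from~\cite{Barmpalias:2008aa}, the expectation works out to $(3/4)^2 = 9/16$, which is strictly positive. The main obstacle is to convert this positive-expected-overlap computation into the pointwise assertion that $\ran(F_0) \cap \ran(F_1) \neq \emptyset$ for every random $F$, since only pointwise non-emptiness contradicts disjointness. The natural way to bridge this gap is to recognize the event ``$\ran(F_0) \cap \ran(F_1) = \emptyset$'' as an effectively null event for $\lambda \otimes \lambda$ on the space of pairs of codes, so that no $\lambda \otimes \lambda$-random pair can realize it; combined with van Lambalgen's theorem, this would rule out injectivity for random $F$.
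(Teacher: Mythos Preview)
Your approach has a genuine gap at the decisive step: the event ``$\ran(F_0)\cap\ran(F_1)=\emptyset$'' is \emph{not} null. Whenever $\ell_x(0)=0$ and $\ell_x(1)=1$ (a clopen event in $3^\omega$ of $\lambda$-measure $1/9$, certainly containing Martin-L\"of random codes), every output of $F$ on $\llb 0\rrb$ begins with $0$ and every output on $\llb 1\rrb$ begins with $1$, so $\ran(F_0)\subseteq\llb 0\rrb$ and $\ran(F_1)\subseteq\llb 1\rrb$ are disjoint. Thus there \emph{are} random $F$ with $\ran(F_0)\cap\ran(F_1)=\emptyset$, and no contradiction follows from first-level disjointness alone. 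The positive expected overlap you compute is entirely compatible with the overlap vanishing on a set of positive measure; it does not make the disjointness event null, let alone effectively null. You could try to salvage the idea by observing that injectivity forces $\ran(F|_{\llb\sigma 0\rrb})\cap\ran(F|_{\llb\sigma 1\rrb})=\emptyset$ for \emph{every} $\sigma$, with independence across $\sigma$'s of a fixed length, so that the probability of injectivity at level $n$ decays; but turning this into a Martin-L\"of test still requires an effective description of the disjointness event and a quantitative bound on its probability, neither of which you have supplied.

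The paper's argument is shorter and quite different. Since $\lambda(\ran(F))>0$ by Theorem~\ref{thm:rcf-range-nonnull} and $\ran(F)$ is $\Pi^{0,F}_1$, there is $y\in\ran(F)\cap\MLR_\lambda^F$; by van Lambalgen's theorem $F$ is then random relative to $y$. A relativization of Theorem~\ref{thm:zeros-rcf}(\ref{it:zeros-rcfi}) (with target $y$ in place of $0^\omega$) shows that $F^{-1}(\{y\})$ is a random closed set relative to $y$, hence perfect by Theorem~\ref{thm-racs-perfect}, hence uncountable. Therefore $F$ is not injective.
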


\begin{proof}
  For any $y\in 2^{\omega}$, a relativization of
  Theorem~\ref{thm:zeros-rcf}(\ref{it:zeros-rcfi}) shows that
  $F^{-1}(\{y\})$, if non-empty, is a random closed set relative to $y$
  provided that $F$ is random relative to $y$. Since $\ran(F)$ has positive
  Lebesgue measure, there is $y\in \ran(F)$ that is random relative to $F$. But
  then by Van Lambalgen's theorem, $F$ is also relative to $y$. So
  $F^{-1}(\{y\})$ is a non-empty random closed set and hence has size
  continuum by Theorem \ref{thm-racs-perfect}. Thus $F$ is not injective.
\end{proof}

\begin{corollary}
  If $F\in\F(\cs)$ is random, then $\ran(F)$ is not a random
  closed set.
\end{corollary}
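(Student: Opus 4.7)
The plan is to combine the immediately preceding theorem with the known null-measure property of random closed sets. Specifically, Theorem \ref{thm:rcf-range-nonnull} guarantees that whenever $F\in\F(\cs)$ is random, $\lambda(\ran(F))>0$. On the other hand, Theorem \ref{thm-racs-null} asserts that every random closed subset of $\cs$ has Lebesgue measure zero. These two facts are directly incompatible for the set $\ran(F)$, which yields the result.

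To execute this, I would first observe that $\ran(F)$ is a bona fide closed subset of $\cs$: by Remark \ref{rmk-rcfs}, $F$ is a closed map (being continuous on the compact Hausdorff space $\cs$), so $\ran(F)$ is closed (indeed $\Pi^{0,F}_1$). This ensures that the question of whether $\ran(F)$ is a random closed set even makes sense. Then, assuming toward contradiction that $\ran(F)$ were a random closed set, Theorem \ref{thm-racs-null} would force $\lambda(\ran(F))=0$, contradicting Theorem \ref{thm:rcf-range-nonnull}.

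There is essentially no obstacle here since the hard analytic work has already been done in establishing $\lambda(\ran(F))>0$; the corollary is a clean deduction from that theorem together with a standard fact about random closed sets. The only point requiring mild care is to invoke the closedness of $\ran(F)$ before comparing it to the class of random closed sets, but this is immediate from Remark \ref{rmk-rcfs}.
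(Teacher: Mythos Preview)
Your proposal is correct and follows essentially the same approach as the paper's proof: both combine Theorem~\ref{thm:rcf-range-nonnull} (that $\lambda(\ran(F))>0$ for random $F$) with Theorem~\ref{thm-racs-null} (that every random closed set has Lebesgue measure zero) to derive a contradiction. Your additional remark invoking Remark~\ref{rmk-rcfs} to confirm that $\ran(F)$ is closed is a reasonable clarification, though the paper omits it.
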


\begin{proof}
By Theorem \ref{thm-racs-null}, every random closed set
  has Lebesgue measure 0.  But by Theorem \ref{thm:rcf-range-nonnull}, the
  range of a random $F\in\F(\cs)$ has positive Lebesgue measure, and thus
  the conclusion follows.
\end{proof}

From the proof of Corollary \ref{cor-rcfs-injective} we can also obtain the following.

\begin{corollary}
Let $F\in\F(\cs)$ be random.  Then the measure $\lambda_F$ induced by $F$ is atomless, that is, $\lambda_F(\{x\})=0$ for every $x\in\cs$.
\end{corollary}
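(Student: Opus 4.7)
The plan is to argue by contradiction: suppose $\lambda_F(\{x\}) > 0$ for some $x \in \cs$, and re-run the chain of implications from the proof of Corollary \ref{cor-rcfs-injective}, with $x$ itself playing the role that $y \in \ran(F) \cap \MLR^F_\lambda$ played there.

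The preliminary step is to note that $\lambda_F$ is $F$-computable. Since $F$ is total (by Theorem \ref{thm-rcfs-total}) and continuous, each preimage $F^{-1}(\llb\sigma\rrb)$ is clopen, and together with its complement $F^{-1}(\cs \setminus \llb\sigma\rrb)$ (also clopen) it partitions $\cs$ into two $\Sigma^{0,F}_1$ classes whose $\lambda$-measures are enumerable from below and sum to $1$. Hence $\lambda_F(\sigma) = \lambda(F^{-1}(\llb\sigma\rrb))$ can be computed to any prescribed precision from $F$, uniformly in $\sigma$. Any atom of an $F$-computable measure is automatically random for that measure: if $(U_n)_{n\in\omega}$ is the universal $\lambda_F$-test relative to $F$, then choosing $n$ with $2^{-n} < \lambda_F(\{x\})$ forces $\{x\} \not\subseteq U_n$, so $x \notin \bigcap_n U_n$ and therefore $x \in \MLR^F_{\lambda_F}$.

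From here I would invoke the inclusion $\MLR^F_{\lambda_F} \subseteq \MLR^F_\lambda$ established in the proof of Theorem \ref{thm:rcf-range-nonnull} (via Lemmas \ref{lem:RCF-comp-leb-barycent} and \ref{lem:barycent-rand-decom}) to obtain $x \in \MLR^F_\lambda$. Van Lambalgen's theorem then yields that $F$ is $\lambda$-random relative to $x$. Because $\lambda_F(\{x\}) > 0$ forces $F^{-1}(\{x\}) \neq \emptyset$, the relativization of Theorem \ref{thm:zeros-rcf}(i) to $x$ makes $F^{-1}(\{x\})$ a random closed set relative to $x$, and the relativization of Theorem \ref{thm-racs-null} gives $\lambda(F^{-1}(\{x\})) = 0$. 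But then $\lambda_F(\{x\}) = \lambda(F^{-1}(\{x\})) = 0$, contradicting the assumption.

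The only new ingredient compared to Corollary \ref{cor-rcfs-injective} is the preliminary step that promotes the hypothetical atom $x$ into $\MLR^F_\lambda$; I do not foresee this being a serious obstacle, since it decomposes cleanly into the $F$-computability of $\lambda_F$ (which uses totality and continuity of $F$) and the elementary fact that atoms of a relatively computable measure are random relative to the same oracle. Everything downstream is a direct re-use of the van Lambalgen argument already spelled out in the preceding corollary.
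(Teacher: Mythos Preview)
Your proposal is correct and follows essentially the same route as the paper: assume an atom $x$, observe $x\in\MLR^{F}_{\lambda_F}$, pass to $\MLR^{F}_{\lambda}$ via the barycenter lemma, invoke van Lambalgen and the relativized Theorem~\ref{thm:zeros-rcf}(i) to make $F^{-1}(\{x\})$ a random closed set, and contradict Theorem~\ref{thm-racs-null}. The only difference is that you spell out the $F$-computability of $\lambda_F$ and the reason an atom is $\lambda_F$-random, whereas the paper compresses this into the single remark that an atom lies in no $\lambda_F$-nullset.
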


\begin{proof}
Let $F\in\F(\cs)$ be random and suppose that $z\in\cs$ is an atom of $\lambda_F$, i.e., $\lambda_F(\{z\})>0$.  It follows that $z\in\MLR_{\lambda_F}^F$, since $z$ is not contained in any $\lambda_F$-nullsets.  As we argued in the proof of Corollary \ref{cor-rcfs-injective},
$F^{-1}(\{z\})$ is a non-empty random closed set and thus has Lesbesgue measure zero by Theorem \ref{thm-racs-null}, contradicting our assumption.
\end{proof}

We now turn to showing that $\lambda(\ran(F))<1$ for every random $F\in\F(\cs)$.  Instead of proving this directly, we will first prove the following.  

\begin{theorem}\label{thm-rcf-notonto}
  If $F\in\mathcal{F}(\cs)$ is surjective, then $F$ is not random.
\end{theorem}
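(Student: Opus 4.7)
To prove the contrapositive --- if $F \in \F(\cs)$ is Martin-L\"of random then $F$ is not surjective --- the plan is to show that the set
\[
\mathcal{S} = \{x \in 3^\omega : F_x \text{ is surjective}\}
\]
is a null $\Pi^0_1$ subclass of $3^\omega$. By Proposition \ref{prop-pi01-null} this gives $\mathcal{S} \cap \MLR_\lambda = \emptyset$, i.e., no random $F$ is surjective.

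First I would verify that $\mathcal{S}$ is $\Pi^0_1$. By Remark \ref{rmk-rcfs}, $\ran(F_x)$ is $\Pi^{0,x}_1$, which means that from $x$ we can enumerate those $\tau \in \str$ with $\llb \tau \rrb \cap \ran(F_x) = \emptyset$; indeed, by K\"onig's lemma such an event is witnessed at a finite depth by the death of the subtree of finite labelings whose non-$2$ label sequence is compatible with a proper prefix of $\tau$. Surjectivity of $F_x$ is exactly the statement that this enumeration never produces a string, which is manifestly $\Pi^0_1$ in $x$.

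Next, to establish $\lambda(\mathcal{S}) = 0$, I would condition on the two root labels $\ell_x(0), \ell_x(1) \in \{0,1,2\}$, which are iid uniform, and use the decomposition
\[
\ran(F_x) \;=\; \ell_x(0)^* \cdot \ran(F_{x,0}) \;\cup\; \ell_x(1)^* \cdot \ran(F_{x,1}),
\]
where $F_{x,0}, F_{x,1}$ are independent random continuous functions. A case analysis over the nine label pairs yields a recurrence expressing $v := \lambda(\mathcal{S})$ in terms of itself and the probabilities of certain joint coverage events for pairs of independent ranges. Propagating the conditioning one further level (so that the four deeper subfunctions become independent) closes the system, and the only solution in $[0,1]$ is $v = 0$.

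The hard part will be carrying out the nullness computation cleanly: the top-level recurrence alone admits nonzero solutions up to roughly $(5-\sqrt{17})/4$, so the depth-two analysis is essential to eliminate them. An alternative route, which I would fall back on, leverages the moment identity $E[\lambda(\ran(F))] = 3/4$ from \cite{Barmpalias:2008aa} together with a second-moment estimate on the number of length-$n$ strings missed by $\ran(F)$, showing that for $\lambda$-almost every $F$ some cylinder $\llb \tau \rrb$ is disjoint from $\ran(F)$. Either way, once $v = 0$ has been established, the theorem follows at once from Proposition \ref{prop-pi01-null}.
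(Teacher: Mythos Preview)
Your plan has the right shape, but there is one technical slip and the central estimate is left undone.

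The slip: $\mathcal{S}$ is not $\Pi^0_1$. Remark~\ref{rmk-rcfs} uses compactness of the domain, so it only gives $\ran(F_x)\in\Pi^{0,x}_1$ when $F_x$ is total. Take $x=2^\omega$: every label is $2$, so for every $\tau$ the tree of strings whose non-$2$ label sequence is compatible with $\tau$ is all of $2^{<\omega}$ and never dies; your enumeration outputs nothing, yet $\ran(F_x)=\emptyset$. Indeed $\mathcal{S}$ is not even closed, since $x=2^\omega$ is a limit of codes for shift maps (label the first $k$ levels by $2$ and deeper nodes by their last bit), which are surjective. The repair is routine---the $\Pi^0_1$ class your enumeration actually carves out is a superset $\mathcal{S}''\supseteq\mathcal{S}$ that agrees with $\mathcal{S}$ on total codes, hence has the same measure, and Proposition~\ref{prop-pi01-null} applied to $\mathcal{S}''$ would suffice---but it needs to be said.

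More seriously, you have not established $\lambda(\mathcal{S})=0$. Your depth-one recurrence already introduces joint events such as ``$\ran(F_0)\cup\ran(F_1)=2^\omega$'' for independent $F_0,F_1$, which are not functions of $v$ alone; the assertion that depth two ``closes the system'' is unsupported, and the second-moment fallback needs pairwise correlations between the events $\{\llb\sigma\rrb\cap\ran(F)=\emptyset\}$ that you do not compute. This is where the genuine work lies, and the paper handles it quite differently. Rather than solve a fixed-point equation for $v$, the paper works with the hitting probabilities $q_n=\lambda\{x:\ran(F_x)\cap\llb 0^n\rrb\neq\emptyset\}$ and their recursion $q_{n+1}=\tfrac32\sqrt{1+4q_n}-\tfrac32-q_n$ from \cite{Barmpalias:2008aa}. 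A chain of elementary Taylor-type estimates pins $\epsilon_n:=q_n-\tfrac34$ between roughly $2^{-(n+5)}$ and $2^{-(n+2)}$, from which it follows that $\bigl(2q_{n+1}/q_n-1\bigr)^{2^n}\leq e^{-1/32}<1$ for all $n\geq 1$. Since the probability of being onto up to level $n$ factors as $(2q_1-1)\prod_{i=1}^{n-1}\bigl(2q_{i+1}/q_i-1\bigr)^{2^i}$, this gives an explicit exponential bound and hence a Martin-L\"of test covering every surjective code directly.
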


To prove Theorem \ref{thm-rcf-notonto}, we provide a careful analysis of the result from \cite{Barmpalias:2008aa} stated at the beginning of this section, namely that for each $y\in\cs$,
\[
\lambda(\{x\in\cs:y\in\ran(F_x)\})=3/4.
\]
This result is obtained by showing that the strictly decreasing sequence $(q_n)_{n\in\omega}$ defined by
\[
q_n=\lambda(\{x\in\cs:\ran(F_x)\cap\llb 0^n\rrb\})
\]
converges to $3/4$ and using the fact that
\[
\lambda(\{x\in\cs:\ran(F_x)\cap\llb 0^n\rrb\})=\lambda(\{x\in\cs:\ran(F_x)\cap\llb \sigma\rrb\})
\]
for each $\sigma\in\str$ of length $n$.  This sequence $(q_n)_{n\in\omega}$ is obtained by using a case analysis to derive the following recursive formula:
  \begin{equation}\label{eq-qn}
    q_{n+1}=\frac{3}{2}\sqrt{1+4q_n}-\frac{3}{2}-q_n.
  \end{equation}
For details, see \cite[Theorem 2.12]{Barmpalias:2008aa}.

For $F\in\F(\cs)$ and $\sigma\in\str$, let us say that $F$ \emph{hits}
$\llb\sigma\rrb$ if $\ran(F)\cap\llb\sigma\rrb\neq\emptyset$.  Thus, $q_n$ is the probability that a random $F\in\mathcal{F}(\cs)$
hits $\llb\sigma\rrb$ for some fixed $\sigma\in\str$ such that $|\sigma|=n$.  


We will proceed by proving a series of lemmas.  First, for each $n\in\omega$, let $\epsilon_n$ satisfy $q_n=3/4+\epsilon_n$.  Since 
  \begin{itemize}
    \item[(i)] $q_n>q_{n+1}$ for every $n$, and
    \item[(ii)] $\lim_{n\rightarrow\infty}q_n=3/4$.
  \end{itemize}
 we know that each $\epsilon_n$ is non-negative and $\lim_{n\rightarrow\infty}\epsilon_n=0$.  Moreover, we have the following.
  
\begin{lemma}\label{lem-rcf-notonto1}
  For each $n\geq 1$,
  \begin{itemize}
    \item[(a)] $\epsilon_{n+1}\leq\frac{1}{2}\epsilon_n$,
    \item[(b)] $\epsilon_n\leq 2^{-(n+2)}$,
    \item[(c)] $\epsilon_{n+1}\geq\frac{1}{2}\epsilon_n-2^{-(2n+5)}$,
    and
    \item[(d)] $\epsilon_n\geq\frac{1}{2^{n+5}-1}$.
  \end{itemize}
\end{lemma}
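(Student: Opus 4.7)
The plan is to first rewrite the recursion~\eqref{eq-qn} directly in terms of $\epsilon_n$. Substituting $q_n = 3/4 + \epsilon_n$ and simplifying yields the cleaner recurrence
$$\epsilon_{n+1} = 3\sqrt{1 + \epsilon_n} - 3 - \epsilon_n,$$
with starting value $\epsilon_0 = 1/4$ (since $q_0 = \lambda(\{x : \ran(F_x) \neq \emptyset\}) = 1$, which follows from Theorem~\ref{thm-rcfs-total} together with Lemma~\ref{lem:RCF-comp-leb-barycent}). All four bounds will then be deduced from this recurrence together with elementary two-term Taylor estimates for $\sqrt{1+x}$.

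For~(a), the elementary bound $\sqrt{1+x} \leq 1 + x/2$ (immediate upon squaring) plugged into the recurrence gives $\epsilon_{n+1} \leq \epsilon_n/2$. Part~(b) then follows by a straightforward induction starting from $\epsilon_0 = 2^{-2}$. For~(c), I use the matching lower bound $\sqrt{1+x} \geq 1 + x/2 - x^2/8$, valid for $x \in [0, 8]$, which can be verified by squaring: the inequality reduces to $x^3(x-8) \leq 0$. This yields $\epsilon_{n+1} \geq \epsilon_n/2 - 3\epsilon_n^2/8$, and using~(b) to bound $3\epsilon_n^2/8 \leq 3 \cdot 2^{-(2n+7)} < 2^{-(2n+5)}$ gives~(c).

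Part~(d) is the main obstacle, since the naive induction hypothesis $\epsilon_n \geq 1/(2^{n+5} - 1)$ does not close against the error $2^{-(2n+5)}$ from~(c); the per-step loss is of order $2^{-(2n+5)}$, whereas the gap between consecutive values of the target bound is only of order $2^{-(2n+12)}$. The idea is to telescope~(c) instead. Setting $a_n \eqdef 2^n \epsilon_n$ and multiplying~(c) by $2^{n+1}$ yields
$$a_{n+1} \geq a_n - 2^{-(n+4)}.$$
Iterating gives $a_n \geq a_1 - \sum_{k=1}^{\infty} 2^{-(k+4)} = a_1 - 1/16$ for every $n \geq 1$. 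A direct algebraic check shows $\epsilon_1 = 3\sqrt{5}/2 - 13/4 > 1/16$, an inequality that reduces to $2880 > 2809$ after clearing denominators and squaring. Hence $a_1 > 1/8$ and $a_n > 1/16$, i.e., $\epsilon_n > 2^{-(n+4)}$. Finally, $2^{-(n+4)} > 1/(2^{n+5} - 1)$ is equivalent to $2^{n+4} > 1$, which completes~(d).
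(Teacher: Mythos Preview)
Your argument is correct and follows essentially the same route as the paper: both derive the recurrence $\epsilon_{n+1}=3\sqrt{1+\epsilon_n}-3-\epsilon_n$, use $\sqrt{1+x}\le 1+x/2$ for (a), induct for (b), use the second-order lower bound $\sqrt{1+x}\ge 1+x/2-x^2/8$ together with (b) for (c), and then iterate (c) from the numerical estimate $\epsilon_1>1/16$ to obtain $\epsilon_n>2^{-(n+4)}\ge 1/(2^{n+5}-1)$ for (d). Your presentation is slightly cleaner in two places---starting (b) from $\epsilon_0=1/4$ rather than computing $\epsilon_1$ directly, and rescaling via $a_n=2^n\epsilon_n$ so that (d) becomes a transparent telescoping sum---but the underlying ideas are identical. (A minor remark: $q_0=1$ follows already from Theorem~\ref{thm-rcfs-total}; the barycenter lemma is not needed there.)
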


\begin{proof}
First, let $n\geq 1$.  If we substitute $3/4+\epsilon_{n+1}$ and $3/4+\epsilon_n$ for $q_{n+1}$ and $q_n$, respectively, into Equation (\ref{eq-qn}), we obtain (after simplification)
  \begin{equation}\label{eq-epsilon}
    \epsilon_{n+1}=3\sqrt{1+\epsilon_n}-3-\epsilon_n.
  \end{equation}
  Since $\sqrt{1+x}\leq 1+\frac{x}{2}$ on $[0,1]$, from (\ref{eq-epsilon}) we
  can conclude
  \[
  \epsilon_{n+1}\leq
  3\bigl(1+\frac{\epsilon_n}{2}\bigr)-3-\epsilon_n=\frac{1}{2}\epsilon_n,
  \]
  thereby establishing (a).  To show (b), we proceed by induction.
  Using the fact from \cite{Barmpalias:2008aa} that $q_1=\frac{\sqrt{45}-5}{2}$,
  it follows by direct calculation that
  \[
  \epsilon_1=\frac{\sqrt{45}-5}{2}-\frac{3}{4}\leq 2^{-3}.
  \]
  Next, assuming that $\epsilon_n\leq 2^{-(n+2)}$, it follows from (a)
  that
  \[
  \epsilon_{n+1}\leq\frac{1}{2}\epsilon_n\leq
  \frac{1}{2}2^{-(n+2)}=2^{-(n+3)}.
  \]
  To show (c), for each fixed $n\geq 1$, we use a different
  approximation of $\sqrt{1+x}$ from below.  By (b), since
  $\epsilon_n\leq 2^{-(n+2)}$, we use the Taylor series approximation
  $1+\frac{x}{2}$ of $\sqrt{1+x}$ centered at 0 on $[0,2^{-(n+2)}]$ with error term
  \[
  \max_{c\in[0,2^{-(n+2)}]}\dfrac{1}{4(1+c)^{3/2}}\dfrac{x^2}{2}=\dfrac{x^2}{8}.
  \]
  Thus,
  \[
  \sqrt{1+x}\geq 1+\frac{x}{2}-\bigl(2^{-(n+2)}\bigr)^2/8=1+\frac{x}{2}-2^{-(2n+7)}
  \]
  on $[0,2^{-(n+2)}]$.  Combining this with Equation (\ref{eq-epsilon})
  yields
  \[
  \epsilon_{n+1}\geq
  3(1+\frac{\epsilon_n}{2}-2^{-(2n+7)})-3-\epsilon_n\geq
  \frac{1}{2}\epsilon_n-2^{-(2n+5)}.
  \]
  Lastly, to prove (d), first observe that
  \begin{equation}\label{eq-ep1}
    \epsilon_1=\frac{\sqrt{45}-5}{2}-\frac{3}{4}\geq 2^{-4}
  \end{equation}
  and thus it certainly follows that
  \[
  \epsilon_1\geq \dfrac{1}{2^6-1}.
  \]
  Next, using (c), we verify by induction that for $n\geq 2$,
  \begin{equation}\label{eq-moreepsilon}
    \epsilon_n\geq\frac{1}{2^{n-1}}\epsilon_1-\bigl(2^{-(n+5)}+\dotsc+2^{-(2n+3)}\bigr).
  \end{equation}
  For $n=2$, by part (c) we have
  \[
  \epsilon_2\geq\frac{1}{2}\epsilon_1-2^{-7}.
  \]
  Supposing that
  \[
  \epsilon_n\geq\frac{1}{2^{n-1}}\epsilon_1-\bigl(2^{-(n+5)}+\dotsc+2^{-(2n+3)}\bigr),
  \]  
 again by part (c) we have
 \begin{equation*}
 \begin{split}
  \epsilon_{n+1}\geq  \frac{1}{2}\epsilon_n-2^{-(2n+5)}&\geq\frac{1}{2}\Bigl(\frac{1}{2^{n-1}}\epsilon_1-\bigl(2^{-(n+5)}+\dotsc+2^{-(2n+3)}\bigr)\Bigr)-2^{-(2n+5)}\\
  &=\frac{1}{2^n}\epsilon_1-\bigl(2^{-(n+6)}+\dotsc+2^{-(2n+4)}\bigr)-2^{-(2n+5)}\\
  &=\frac{1}{2^n}\epsilon_1-\bigl(2^{-(n+6)}+\dotsc+2^{-(2n+5)}\bigr),
  \end{split}
 \end{equation*}
which establishes Equation (\ref{eq-moreepsilon}).
Combining Equations (\ref{eq-ep1}) and (\ref{eq-moreepsilon}) yields
  \begin{equation*}
  \begin{split}
  \epsilon_n&\geq\frac{1}{2^{n-1}}2^{-4}-\bigl(2^{-(n+5)}+\dotsc+2^{-(2n+3)}\bigr).\\
  &= \frac{1}{2^{(n+3)}}-2^{-(n+4)}\bigl(2^{-1}+\dotsc+2^{-(n-1)}\bigr)\\
  &= \frac{1}{2^{(n+3)}}-2^{-(n+4)}(1-2^{-(n-1)})\\
  &\geq 2^{-(n+3)}-2^{-(n+4)}\\
  &\geq  2^{-(n+4)}\\
  &\geq \dfrac{1}{2^{n+5}-1}.
  \end{split}
  \end{equation*}

\end{proof}

\begin{lemma}\label{lem-rcf-notonto2}
  For $n\geq 1$, we have
  \[
  \dfrac{q_{n+1}}{q_n}\leq 1-2^{-(n+6)}.
  \]
\end{lemma}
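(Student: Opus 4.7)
The plan is to rewrite $q_{n+1}/q_n$ as $1 - (q_n - q_{n+1})/q_n$ and bound the second term below by $2^{-(n+6)}$. Since $q_k = 3/4 + \epsilon_k$ by definition, the difference $q_n - q_{n+1}$ is exactly $\epsilon_n - \epsilon_{n+1}$, so the whole problem reduces to the bookkeeping on $\epsilon_n$ already packaged in Lemma~\ref{lem-rcf-notonto1}.

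First I would use part (a) of Lemma~\ref{lem-rcf-notonto1}: since $\epsilon_{n+1} \leq \tfrac{1}{2}\epsilon_n$, we get $q_n - q_{n+1} = \epsilon_n - \epsilon_{n+1} \geq \tfrac{1}{2}\epsilon_n$. Next, part (d) of that lemma gives $\epsilon_n \geq 1/(2^{n+5}-1)$, so $q_n - q_{n+1} \geq 1/\bigl(2(2^{n+5}-1)\bigr)$. A direct manipulation shows $1/\bigl(2(2^{n+5}-1)\bigr) \geq 2^{-(n+6)}$, since this is equivalent to $2^{n+6} \geq 2(2^{n+5}-1) = 2^{n+6} - 2$.

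Finally, because $q_n$ is a probability we have $q_n \leq 1$, so
\[
\frac{q_n - q_{n+1}}{q_n} \geq q_n - q_{n+1} \geq 2^{-(n+6)},
\]
which rearranges to the stated inequality $q_{n+1}/q_n \leq 1 - 2^{-(n+6)}$. There is essentially no obstacle here: the work was done in Lemma~\ref{lem-rcf-notonto1}, where parts (a) and (d) were tuned precisely to make the ratio gap estimate fall out. The one thing to keep an eye on is the chain of constants — in particular the slack $2^{n+6} - 2 \leq 2^{n+6}$ — which is comfortably loose and hence robust.
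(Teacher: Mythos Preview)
Your proof is correct and uses the same two ingredients as the paper --- parts (a) and (d) of Lemma~\ref{lem-rcf-notonto1} --- but you package the algebra differently, and in fact more cleanly. The paper works directly toward the inequality $(1-2^{-(n+6)})q_n \geq q_{n+1}$: it rewrites this in terms of $\epsilon_n$ and $\epsilon_{n+1}$, so the constant $3/4$ enters the computation and must be absorbed (this is why the paper passes through the slightly mysterious step $4\cdot 2^{-(n+7)} \geq 3\cdot 2^{-(n+7)}$, which produces exactly the $\tfrac{3}{4}\cdot 2^{-(n+6)}$ needed on the right-hand side). You instead observe that $q_n - q_{n+1} = \epsilon_n - \epsilon_{n+1} \geq \tfrac{1}{2}\epsilon_n \geq 2^{-(n+6)}$ and then invoke the trivial bound $q_n \leq 1$ to turn an additive gap into a ratio bound. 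This extra observation --- which the paper does not use --- buys you a shorter argument with no bookkeeping on the $3/4$ term, at the cost of giving away a bit of slack (you bound $q_n$ by $1$ rather than by something close to $3/4$). Since the target inequality is loose anyway, the tradeoff is harmless.
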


\begin{proof}

  By Lemma \ref{lem-rcf-notonto1}(d),
  \[
  \epsilon_n\geq\dfrac{1}{2^{n+5}-1}=\dfrac{2^{-(n+5)}}{1-2^{-(n+5)}},
  \]
  which implies
  \[
  \bigl(1-2^{-(n+5)}\bigr)\epsilon_n\geq 2^{-(n+5)}=4\cdot2^{-(n+7)}\geq
  3\cdot2^{-(n+7)}.
  \]
  Multiplying both sides by 1/2 yields
  \[
  \frac{1}{2}\bigl(1-2^{-(n+5)}\bigr)\epsilon_n\geq\frac{3}{4}2^{-(n+6)}.
  \]
  Expanding the left hand side and using the fact from Lemma \ref{lem-rcf-notonto1}(a) that
  $\frac{1}{2}\epsilon_n\geq\epsilon_{n+1}$, we have
  \[
  \frac{1}{2}\epsilon_n+\Bigl(\frac{1}{4}+\dotsc+2^{-(n+6)}\Bigr)\epsilon_n\geq\frac{3}{4}2^{-(n+6)}+\epsilon_{n+1}
  \]
  which is equivalent to
  \[
  (1-2^{-(n+6)})\epsilon_n+\frac{3}{4}(1-2^{-(n+6)})\geq\frac{3}{4}+\epsilon_{n+1}
  \]
  This yields the inequality
  \[
  \bigl(1-2^{-(n+6)}\bigr)q_n\geq q_{n+1},
  \]
  from which the conclusion follows.
\end{proof}

\begin{lemma}\label{lem-rcf-notonto3}
  For $n\geq 1$, we have
  \[ {\Biggl(2\Bigl(\dfrac{q_{n+1}}{q_n}\Bigr)-1\Biggr)}^{2^n}\leq
  \frac{1}{\sqrt[32]{e}}<1.
  \]
\end{lemma}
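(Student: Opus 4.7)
The plan is to combine Lemma~\ref{lem-rcf-notonto2} with the standard inequality $1 - x \leq e^{-x}$, valid for all real $x$. First I would apply Lemma~\ref{lem-rcf-notonto2} to obtain
\[
2\Bigl(\dfrac{q_{n+1}}{q_n}\Bigr) - 1 \leq 2\bigl(1 - 2^{-(n+6)}\bigr) - 1 = 1 - 2^{-(n+5)}.
\]
Since the quantity on the left is positive (as $q_{n+1}/q_n > 1/2$, because $q_n$ tends to $3/4$), raising to the $2^n$ power preserves the inequality, giving
\[
\Biggl(2\Bigl(\dfrac{q_{n+1}}{q_n}\Bigr) - 1\Biggr)^{2^n} \leq \bigl(1 - 2^{-(n+5)}\bigr)^{2^n}.
\]

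Next I would invoke $1 - x \leq e^{-x}$ with $x = 2^{-(n+5)}$ to conclude
\[
\bigl(1 - 2^{-(n+5)}\bigr)^{2^n} \leq e^{-2^n \cdot 2^{-(n+5)}} = e^{-2^{-5}} = \frac{1}{\sqrt[32]{e}},
\]
which is strictly less than $1$, completing the argument.

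There is essentially no obstacle here: the entire lemma is an elementary consequence of Lemma~\ref{lem-rcf-notonto2}, chosen precisely so that the exponent $n+6$ in that lemma cancels the $2^n$ to leave a fixed constant $2^{-5}$ in the exponential. The only minor point to verify is the positivity of $2(q_{n+1}/q_n) - 1$, which is needed to safely raise to a large power; this follows because $q_{n+1} \geq 3/4$ and $q_n \leq q_1 < 1$, so $q_{n+1}/q_n > 3/4 > 1/2$.
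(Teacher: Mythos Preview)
Your proof is correct and follows the same overall strategy as the paper: reduce to Lemma~\ref{lem-rcf-notonto2} to obtain $2(q_{n+1}/q_n)-1\le 1-2^{-(n+5)}$, then bound $(1-2^{-(n+5)})^{2^n}$ by $e^{-1/32}$. The only difference is in this last step: the paper argues that the sequence $\bigl((1-2^{-(n+5)})^{2^n}\bigr)_{n}$ is non-decreasing with limit $1/\sqrt[32]{e}$, whereas you bypass this by applying $1-x\le e^{-x}$ directly. Your finish is cleaner and shorter, and also makes transparent why the exponent $n+6$ in Lemma~\ref{lem-rcf-notonto2} was chosen.
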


\begin{proof}
  First, it follows from Lemma \ref{lem-rcf-notonto2} that
  \[
  2\Bigl(\dfrac{q_{n+1}}{q_n}\Bigr)-1\leq 1-2^{-(n+5)}
  \]
  and hence
  \begin{equation}\label{eq5}
    {\Biggl(2\Bigl(\dfrac{q_{n+1}}{q_n}\Bigr)-1\Biggr)}^{2^n}\leq
    \Bigl(1-2^{-(n+5)}\Bigr)^{2^n}.
  \end{equation}
  Next, it is straightforward to verify by cross-multiplication that
  \[
  \dfrac{2^{n+5}-1}{2^{n+5}}\leq\dfrac{2^{n+6}-1}{2^{n+6}}
  \]
  and
  \[
  \dfrac{2^{n+5}-1}{2^{n+5}}\leq\Biggl(\dfrac{2^{n+6}-1}{2^{n+6}}\Biggr)^2,
  \]
  from which it follows that
  \[
  {\Biggl(\dfrac{2^{n+5}-1}{2^{n+5}}\Biggr)}^{2^n}\leq\Biggl(\dfrac{2^{n+6}-1}{2^{n+6}}\Biggr)^{2^{n+1}}.
  \]
  Lastly, we have
  \[
  \lim_{n\rightarrow\infty}\Bigl(1-2^{-(n+5)}\Bigr)^{2^n}=\dfrac{1}{\sqrt[32]{e}}.
  \]
  From Equation (\ref{eq5}) and the fact that the sequence
  $\bigl((1-2^{-(n+5)})^{2^n}\bigr)_{n\in\omega}$ is non-decreasing and
  converges to $1/\sqrt[32]{e}$, the claim immediately follows.
\end{proof}

The proof of following result is essentially the proof of the
effective Choquet capacity theorem in \cite{Brodhead:2011aa}.  We
reproduce the proof here for the sake of completeness.

\begin{lemma}
  \label{lem-rcf-notonto3}
  The probability that a random continuous function $F$ hits both
  $\llb0\rrb$ and $\llb1\rrb$ is $2q_1-1$, and the probability that
  $F$ hits both $\llb\sigma0\rrb$ and $\llb\sigma1\rrb$ for a fixed
  $\sigma\in\str$ of length $n\geq 1$, given that $F$ hits
  $\llb\sigma\rrb$, is equal to $2\Bigl(\dfrac{q_{n+1}}{q_n}\Bigr)-1$.
\end{lemma}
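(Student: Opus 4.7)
The plan is to prove both statements by a direct inclusion-exclusion argument, using three simple observations. First, as noted in the paragraph preceding the lemma, by the symmetry of the coding one has $\lambda_\F(\{F : F\text{ hits }\llb\tau\rrb\}) = q_{|\tau|}$ for every $\tau\in\str$ of length $n\geq 1$. Second, by Theorem \ref{thm-rcfs-total} every $\lambda$-random code yields a total function, and since $\lambda(\MLR_\lambda) = 1$, $\lambda_\F$-almost every $F\in\F(\cs)$ is total (and hence has non-empty range). Third, the inclusion $\{F : F\text{ hits }\llb\sigma i\rrb\}\subseteq\{F: F\text{ hits }\llb\sigma\rrb\}$ holds trivially for each $\sigma\in\str$ and $i\in\{0,1\}$.

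For the first assertion, observe that $\llb 0\rrb\cup\llb 1\rrb = \cs$, so a total $F$ automatically hits at least one of $\llb 0\rrb$ and $\llb 1\rrb$. By the second observation, the event that $F$ hits $\llb 0\rrb$ or $\llb 1\rrb$ therefore has $\lambda_\F$-probability one. Since each individual event has probability $q_1$, inclusion-exclusion immediately yields
\begin{displaymath}
\lambda_\F(\{F : F\text{ hits both }\llb 0\rrb\text{ and }\llb 1\rrb\}) = q_1 + q_1 - 1 = 2q_1 - 1.
\end{displaymath}

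For the second assertion, fix $\sigma\in\str$ with $|\sigma| = n\geq 1$ and condition on the event $\A = \{F : F\text{ hits }\llb\sigma\rrb\}$, which has positive probability $q_n \geq 3/4$. If $F\in\A$ is total, then $\ran(F)\cap\llb\sigma\rrb$ is a non-empty subset of $\llb\sigma 0\rrb\cup\llb\sigma 1\rrb$, so $F$ hits at least one of the two children; thus the conditional probability of hitting $\llb\sigma 0\rrb\cup\llb\sigma 1\rrb$ given $\A$ equals one. Moreover, by the first and third observations, each of $\{F : F\text{ hits }\llb\sigma i\rrb\}$ for $i\in\{0,1\}$ has conditional probability $q_{n+1}/q_n$ given $\A$. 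Inclusion-exclusion then produces the desired conditional probability
\begin{displaymath}
\frac{q_{n+1}}{q_n} + \frac{q_{n+1}}{q_n} - 1 = 2\Bigl(\dfrac{q_{n+1}}{q_n}\Bigr) - 1.
\end{displaymath}

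I anticipate no substantive obstacle: the only point requiring any care is the almost-sure totality of $F$, so that $\ran(F)$ is non-empty whenever we intersect it with a basic clopen set; this is provided immediately by Theorem \ref{thm-rcfs-total} together with $\lambda(\MLR_\lambda) = 1$.
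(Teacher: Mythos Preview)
Your proof is correct and takes essentially the same approach as the paper: both arguments rest on almost-sure totality (so that $\lambda_\F(H_0\cup H_1)=1$, respectively $\lambda_\F(H_{\sigma0}\cup H_{\sigma1}\mid H_\sigma)=1$) together with the symmetry $\lambda_\F(H_\tau)=q_{|\tau|}$, followed by an inclusion--exclusion computation. The only cosmetic difference is that the paper partitions the union into $H_{\sigma0}\setminus H_{\sigma1}$, $H_{\sigma1}\setminus H_{\sigma0}$, and $H_{\sigma0}\cap H_{\sigma1}$ and computes the first two pieces separately, whereas you apply $\lambda_\F(A\cap B)=\lambda_\F(A)+\lambda_\F(B)-\lambda_\F(A\cup B)$ directly; these are equivalent.
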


\begin{proof}
  First, let us write the probability that $F$ hits $\llb\sigma\rrb$
  for some fixed $\sigma$ as $\mathbb{P}(F\in H_\sigma)$.  Now since
  $\mathbb{P}(F\in H_0)=q_1$, it follows that $\mathbb{P}(F\in
  H_1\setminus H_0)=1-q_1$ (here we use the fact that every random function
  is total).  By symmetry, we have $\mathbb{P}(F\in
  H_0\setminus H_1)=1-q_1$.  Since $F$ is total with probability one,
  it follows that
  \[
  \mathbb{P}(F\in H_0\cap H_1)=1-\bigl(\mathbb{P}(F\in H_0\setminus
  H_1)+\mathbb{P}(F\in H_1\setminus H_0)\bigr)
  \]
  and thus
  \[
  \mathbb{P}(F\in H_0\cap H_1)=1-((1-q_1)+(1-q_1))=2q_1-1.
  \]
  Next, let $\sigma$ be a string of length $n\geq 1$ and let
  $i\in\{0,1\}$.  Since $\mathbb{P}(F\in H_\sigma)=q_n$ and
  $\mathbb{P}(F\in H_{\sigma^\frown i})=q_{n+1}$ it follows that
  \[
  \mathbb{P}(F\in H_{\sigma^\frown i}\mid F\in
  H_\sigma)=\dfrac{\mathbb{P}(F\in
    H_{\sigma^\frown i}\;\&\;F\in H_{\sigma})}{\mathbb{P}(F\in
    H_\sigma)}=\dfrac{\mathbb{P}(F\in H_{\sigma^\frown
      i})}{\mathbb{P}(F\in H_\sigma)}=\dfrac{q_{n+1}}{q_n}.
  \]
  Consequently,
  \[
  \mathbb{P}(F\in H_{\sigma1}\setminus H_{\sigma0}\mid F\in H_\sigma)=\mathbb{P}(F\in
  H_{\sigma0}\setminus H_{\sigma1}\mid F\in H_\sigma)=1-\dfrac{q_{n+1}}{q_n}
  \]
  Thus,
  \begin{equation*}
    \begin{split}
      \mathbb{P}(F\in H_{\sigma0}\cap H_{\sigma1}\mid F\in H_\sigma )&=1-\bigl(\mathbb{P}(F\in H_{\sigma0}\setminus H_{\sigma1}\mid F\in H_\sigma)+\mathbb{P}(F\in H_{\sigma1}\setminus H_{\sigma0}\mid F\in H_\sigma)\bigr)\\
      &=1-\Bigl(\bigl(1-\dfrac{q_{n+1}}{q_n}\bigr)+\bigl(1-\dfrac{q_{n+1}}{q_n}\bigr)\Bigr)\\
      &=2\Bigl(\dfrac{q_{n+1}}{q_n}\Bigr)-1.
    \end{split}
  \end{equation*}
\end{proof}

To complete the proof of Theorem \ref{thm-rcf-notonto}, we now define
a Martin-L\"of test on $\F(\cs)$ that covers all surjective functions.  Let
us say that a function $F\in\F(\cs)$ is \emph{onto up to level $n$} if
$F\in H_\sigma$ for every $\sigma\in2^n$.  By Lemma
\ref{lem-rcf-notonto3}, the probability of a function being onto up to
level $n$ is
\[
(2q_1-1)\prod_{i=1}^{n-1}{\Biggl(2\Bigl(\dfrac{q_{i+1}}{q_i}\Bigr)-1\Biggr)}^{2^i}\leq
{\Biggl(\dfrac{1}{\sqrt[32]{e}}\Biggr)}^{n}.
\]
Thus, if we set
\[
\U_n=\{F\in\F(\cs):F\;\mathrm{is\;onto\;up\;to\;level}\;n\},
\]
and
\[
f(n)=\min\{k:(\sqrt[32]{e})^{-k}\leq 2^{-n}\},
\]
which is clearly computable, 
then $(\U_{f(n)})_{n\in\omega}$ is a Martin-L\"of test with the property
that $F\in\F(\cs)$ is onto if and only if
$F\in\bigcap_{n\in\omega}\U_{f(n)}$. This completes the proof.

\begin{corollary}\label{cor:rcf-range-nonfull}
  If $F\in\F(\cs)$ is random, then $\lambda(\ran(F))<1$.
\end{corollary}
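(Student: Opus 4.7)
The plan is to derive Corollary \ref{cor:rcf-range-nonfull} directly from Theorem \ref{thm-rcf-notonto}, by showing that for a random $F \in \F(\cs)$, having $\lambda(\ran(F)) = 1$ would force $F$ to be surjective, contradicting that theorem.

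The key observation is a general topological fact about $\cs$: any closed subset of $\cs$ with Lebesgue measure $1$ must equal $\cs$. This is because its complement is open with Lebesgue measure $0$, but every non-empty open subset of $\cs$ contains a basic clopen set $\llb\sigma\rrb$, which has positive Lebesgue measure $2^{-|\sigma|}$. So the complement is empty.

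To apply this, I first need to note that $\ran(F)$ is closed. By Remark \ref{rmk-rcfs}, $F_x$ is a closed map (since $\cs$ is compact and $\cs$ is Hausdorff), so $\ran(F)$ is a closed subset of $\cs$. Now suppose for contradiction that $F \in \F(\cs)$ is random and $\lambda(\ran(F)) = 1$. By the topological fact just mentioned, $\ran(F) = \cs$, so $F$ is surjective. But then Theorem \ref{thm-rcf-notonto} implies that $F$ is not random, a contradiction. Combined with Theorem \ref{thm:rcf-range-nonnull}, this yields the promised conclusion $\lambda(\ran(F)) \in (0,1)$.

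There is no real obstacle here; all the heavy lifting has already been done in Theorem \ref{thm-rcf-notonto}, which establishes non-surjectivity for every random continuous function via the explicit Martin-L\"of test $(\U_{f(n)})_{n \in \omega}$. The corollary is essentially just the measure-theoretic packaging of that result, using that closed sets of full measure in $\cs$ exhaust the space.
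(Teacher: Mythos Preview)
Your proof is correct and follows essentially the same route as the paper: assume $\lambda(\ran(F))=1$, use that $\ran(F)$ is closed to conclude $\ran(F)=\cs$, and invoke Theorem \ref{thm-rcf-notonto} for a contradiction. You have even spelled out the topological reason why a closed set of full Lebesgue measure must be all of $\cs$, which the paper leaves implicit.
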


\begin{proof}
Suppose $\lambda(\ran(F))=1$.  Then since $\ran(F)$ is closed, it follows that $\ran(F)=\cs$.  But then $F$ is onto, so it cannot be random.  
\end{proof}


We also have the following corollary.

\begin{theorem}
No measure induced by a random function is a random measure in the sense of Definition \ref{defn-random-measure}.
\end{theorem}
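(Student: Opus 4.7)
The plan is to use the characterization of random measures via their support. Recall that Section~\ref{sec-aro} observed that $\Supp(\mu) = \cs$ for every random measure $\mu$ (this is immediate from Definition~\ref{defn-random-measure}, since a code $x \in \MLR_\lambda$ never produces a column encoding the real $0$ or $1$, so no conditional is $0$). Hence it suffices to exhibit, for every random $F \in \F(\cs)$, some cylinder on which $\lambda_F$ vanishes.

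First I would observe that $\lambda_F$ is supported on $\ran(F)$, since $\lambda_F(A) = \lambda(F^{-1}(A)) = 0$ whenever $A \cap \ran(F) = \emptyset$. In particular $\Supp(\lambda_F) \subseteq \ran(F)$. Next I would invoke Corollary~\ref{cor:rcf-range-nonfull}, which gives $\lambda(\ran(F)) < 1$ for every random $F$. Since $\ran(F)$ is closed (by Remark~\ref{rmk-rcfs}), a closed proper subset of $\cs$ of Lebesgue measure strictly less than one is a proper subset of $\cs$, so there exists some $\sigma \in \str$ with $\llb\sigma\rrb \cap \ran(F) = \emptyset$, whence $\lambda_F(\llb\sigma\rrb) = 0$.

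Combining these observations, $\Supp(\lambda_F) \neq \cs$, so $\lambda_F$ cannot be a random measure in the sense of Definition~\ref{defn-random-measure}. There is really no obstacle here: the heavy lifting has been done in the previous section, where it was shown that the range of a random continuous function is a proper closed subset of $\cs$ of positive but strictly less than full Lebesgue measure. The current theorem is just the observation that this forces the pushforward measure to have non-full support, which precludes it from being $P$-random.
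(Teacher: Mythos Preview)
Your proof is correct and follows essentially the same approach as the paper: both arguments invoke Corollary~\ref{cor:rcf-range-nonfull} to conclude that $\ran(F)$ is a proper closed subset of $\cs$, find a cylinder $\llb\sigma\rrb$ in its open complement on which $\lambda_F$ vanishes, and contrast this with the fact that every random measure has full support. The only difference is cosmetic: you phrase the conclusion in terms of $\Supp(\lambda_F)\neq\cs$, while the paper states directly that $\lambda_F(\sigma)=0$ while $\mu(\sigma)>0$ for every random $\mu$.
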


\begin{proof}
Let $F\in\F(\cs)$ be random.  Then by Corollary \ref{cor:rcf-range-nonfull}, $\lambda(\ran(F))<1$.  Thus, it follows that $\cs\setminus\ran(F)$ is non-empty and open, so  $\llb\sigma\rrb\subseteq\cs\setminus\ran(F)$ for some $\sigma\in\str$.  Thus, $\lambda_F(\sigma)=0$.  By contrast, for every random measure $\mu$, we have $\mu(\sigma)>0$, and the result follows.
\end{proof}


%
%
%

\bibliographystyle{alpha} \bibliography{interplay}

\end{document}